\newcommand{\Av}{\operatorname{Av}}
\newtheorem*{rep@theorem}{\rep@title}
\newcommand{\newreptheorem}[2]{%
\newenvironment{rep#1}[1]{%
 \def\rep@title{#2 \ref{##1}}%
 \begin{rep@theorem}}%
 {\end{rep@theorem}}}
\newtheorem{theorem}{Theorem}[section]
\newtheorem{lemma}[theorem]{Lemma}
\newtheorem{proposition}[theorem]{Proposition}
\newtheorem{corollary}[theorem]{Corollary}
\newtheorem{conjecture}[theorem]{Conjecture}
\theoremstyle{definition}
\newtheorem{definition}[theorem]{Definition}
\newtheorem{remark}[theorem]{Remark}
\DeclareMathOperator{\VHC}{\mathsf{VHC}}
\DeclareMathOperator{\act}{act}
\DeclareMathOperator{\tl}{tl}
\DeclareMathOperator{\Int}{Int}
\DeclareMathOperator{\comp}{comp}
\DeclareMathOperator{\SW}{SW}
\DeclareMathOperator{\cl}{cl}
\DeclareMathOperator{\DL}{\Lambda\hspace{-.095cm}\Lambda}
\DeclareMathOperator{\lon}{long}
\begin{document}
\title{Motzkin Intervals and Valid Hook Configurations}
\author{Colin Defant}
\address{Princeton University \\ Fine Hall, 304 Washington Rd. \\ Princeton, NJ 08544}
\email{cdefant@princeton.edu}

\begin{abstract}
We define a new natural partial order on Motzkin paths that serves as an intermediate step between two previously-studied partial orders. We provide a bijection between valid hook configurations of $312$-avoiding permutations and intervals in these new posets. We also show that valid hook configurations of permutations avoiding $132$ (or equivalently, $231$) are counted by the same numbers that count intervals in the Motzkin-Tamari posets that Fang recently introduced, and we give an asymptotic formula for these numbers. We then proceed to enumerate valid hook configurations of permutations avoiding other collections of patterns. We also provide enumerative conjectures, one of which links valid hook configurations of $312$-avoiding permutations, intervals in the new posets we have defined, and certain closed lattice walks with small steps that are confined to a quarter plane.   
\end{abstract}

\maketitle

\bigskip

\section{Introduction}\label{Sec:Intro}

\subsection{Partial Orders on Motzkin Paths}

A \emph{Motzkin path} is a lattice path consisting of $(1,1)$ steps (called \emph{up steps}), $(1,-1)$ steps (called \emph{down steps}), and $(1,0)$ steps (called \emph{east steps}) that starts at the origin, ends on the horizontal axis, and never passes below the horizontal axis. Let $U,D,E$ represent up, down, and east steps, respectively. We can think of a Motzkin path $\Lambda$ of length $n$ as a word $\Lambda_1\cdots\Lambda_n$ of length $n$ over the alphabet $\{U,D,E\}$ that has as many $U$'s as it has $D$'s and also has the property that each of its prefixes has at least as many $U$'s as $D$'s. The number of Motzkin paths of length $n$ is the $n^\text{th}$ Motzkin number $M_n$ (OEIS sequence A001006). Let ${\bf M}_n$ be the set of Motzkin paths of length $n$. A \emph{Dyck path} is a Motzkin path that has no east steps. Let ${\bf D}_k$ be the set of Dyck paths of length $2k$. 

There is a natural partial order $\leq_S$ on ${\bf M}_n$ that we obtain by declaring that $\Lambda\leq_S\Lambda'$ if $\Lambda$ lies below or is equal to $\Lambda'$. Alternatively, we have $\Lambda_1\cdots\Lambda_n\leq_S\Lambda_1'\cdots\Lambda_n'$ if and only if the number of $U$'s in $\Lambda_1\cdots\Lambda_i$ is at most the number of $U$'s in $\Lambda_1'\cdots\Lambda_i'$ for every $i\in[n]$. When $n=2k$, this order relation induces a poset on the subset ${\bf D}_k\subseteq {\bf M}_n$. Among other results, Ferrari and Pinzani \cite{Ferrari} proved that the posets $\mathcal L_k^S:=({\bf D}_k,\leq_S)$ and $\mathcal M_n^S:=({\bf M}_n,\leq_S)$ are lattices. Bernardi and Bonichon \cite{Bernardi} called $\mathcal L_k^S$ the $k^\text{th}$ \emph{Stanley lattice}. By analogy, we call $\mathcal M_n^S$ the $n^\text{th}$ \emph{Motzkin--Stanley lattice}. 
 
The $k^\text{th}$ Tamari lattice, which we denote by $\mathcal L_k^T$, is an extremely important sublattice of the $k^\text{th}$ Stanley lattice. Tamari lattices have seen a huge amount of attention from researchers in combinatorics, group theory, theoretical computer science, algebraic geometry, and algebraic topology \cite{Bernardi, Chapoton, Combe, Csar, Early, Fang2, Geyer, Huang, Knuth2, Loday, Pallo2, Tamari}. Recently, Fang introduced new posets defined on the sets ${\bf M}_n$ that are analogous to the Tamari lattices. He investigated the structural and enumerative aspects of the components and intervals of these posets. We denote these posets, which we define formally in Section \ref{Sec:Motzkin}, by $\mathcal M_n^T$. 

In Section \ref{Sec:Motzkin}, we define new posets $\mathcal M_n^C$ that are natural intermediate steps between the Motzkin-Stanley lattices $\mathcal M_n^S$ and the Motzkin-Tamari posets $\mathcal M_n^T$. More precisely, $\mathcal M_n^T$ is a subposet of $\mathcal M_n^C$, which in turn is a subposet of $\mathcal M_n^S$. The intervals in the lattices $\mathcal L_k^S$ and $\mathcal L_k^T$ have been the subject of recent investigations \cite{Bernardi, Combe, De, DefantCatalan}. One of our main goals in this paper is to link the intervals in the posets $\mathcal M_n^C$ and $\mathcal M_n^T$ with recently-introduced combinatorial objects called ``valid hook configurations." We define these objects formally in Section \ref{Sec:VHCs}, but roughly speaking, they are configurations of L-shaped ``hooks" drawn on permutations that satisfy particular constraints. 

\subsection{Valid Hook Configurations}

The original motivation for studying valid hook configurations comes from a formula in the study of West's stack-sorting map. We refer to \cite{Bona, BonaSurvey, DefantCatalan, DefantCounting, DefantTroupes, DefantEngenMiller} for the definition of this map and additional information about it. West \cite{West} defined the \emph{fertility} of a permutation $\pi$ to be the number of preimages of $\pi$ under the stack-sorting map, and he computed the fertilities of some very specific permutations. Bousquet-M\'elou \cite{Bousquet} found an algorithm to determine whether or not the fertility of a permutation is $0$, and she asked for a general method for computing the fertility of an arbitrary permutation. This was accomplished in \cite{DefantPostorder,DefantPreimages}, where the current author found a formula for the fertility of a permutation $\pi$ as a sum over the valid hook configurations of $\pi$. See also \cite{Fertilitopes, FertilityMono, Polyurethane, StackCoxeter, DefantZheng, FertilityNumbers, StackPreimages}

More recently, the author has found a new formula involving a sum over all valid hook configurations of permutations of length $n-1$ that converts from free cumulants to classical cumulants in noncommutative probability theory \cite{DefantTroupes}. There have been several recent papers devoted to finding combinatorial formulas that convert from one type of cumulant sequence to another \cite{Arizmendi, Belinschi, Celestino, Ebrahimi, Lehner, Josuat}, and it is very surprising that valid hook configurations show up naturally in one such formula. This connection between valid hook configurations and free probability theory is not only unexpected, but is also very useful. Indeed, by combining the fertility formula with the formula that converts between cumulants, the author has been able to apply tools from free probability theory to prove several new surprising propoerties of the stack-sorting map \cite{DefantTroupes}. It seems that the connection with free probability, which passes directly through valid hook configurations, is responsible for much of the unexpected hidden structure lying beneath the stack-sorting map. 

There are two other directions that the paper \cite{DefantTroupes} takes with valid hook configurations. The first vastly generalizes the fertility formula by defining some natural sets of trees called \emph{troupes} and showing that valid hook configurations allow one to enumerate the trees in a troupe that have a given permutation as their postorder readings. The second direction provides yet another formula that converts from free to classical cumulants; this formula, however, is given by a sum over $231$-avoiding valid hook configurations. The fact that $231$-avoiding valid hook configurations appear naturally in this cumulant conversion formula motivates us to understand the combinatorial properties of these objects; we enumerate them in Section~\ref{Sec:132}. More generally, one can consider valid hook configurations avoiding other patterns; this is the goal of the current article.   

The authors of \cite{DefantEngenMiller} found that valid hook configurations (not avoiding any patterns) are enumerated by the absolute values of the classical cumulants of the free Poisson law with rate $-1$. In \cite{DefantTroupes}, this was used to show that
\[\sum_{n\geq 1}|\VHC(S_{n-1})|\frac{x^n}{n!}=-\log\left(1-x\,{}_1\hspace{-.03cm}F_2\left(\frac{1}{2};\frac{3}{2},2;-x^2\right)\right),\] where $\VHC(S_{n-1})$ is the set of valid hook configurations of permutations in $S_{n-1}$ and ${}_1\hspace{-.03cm}F_2$ denotes a generalized hypergeometric function. The article \cite{DefantEngenMiller} shows that uniquely sorted permutations, which are permutations with a unique preimage under the stack-sorting map, are counted by Lassalle's sequence, a fascinating new sequence that was introduced in \cite{Lassalle}  (sequence A180874 in \cite{OEIS}). The terms in Lassalle's sequence are the absolute values of the classical cumulants of the standard semicircular distribution. It turns out that counting uniquely sorted permutations in $S_n$ is equivalent to counting valid hook configurations of permutations in $S_n$ with $\frac{n-1}{2}$ hooks. Thus, results that enumerate certain uniquely sorted permutations can be interpreted as results that enumerate certain valid hook configurations. The paper \cite{DefantCatalan} produced bijections between uniquely sorted permutations that avoid certain patterns and intervals in posets defined on Dyck paths, and Mularczyk \cite{Hanna} enumerated even more sets of pattern-avoiding uniquely sorted permutations. Thus, the papers \cite{DefantCatalan, DefantEngenMiller, Hanna} have counted valid hook configurations of unrestricted permutations in $S_n$, valid hook configurations of permutations in $S_n$ with $\frac{n-1}{2}$ hooks, and valid hook configurations of pattern-avoiding permutations in $S_n$ with $\frac{n-1}{2}$ hooks. What is missing, which is the focus of the current article, is the investigation of valid hook configurations of pattern-avoiding permutations in $S_n$ with no restriction on the number of hooks. 

\subsection{Notation and Terminology}
A \emph{permutation} is an ordering of a set of positive integers, which we write as a word. Let $S_n$ be the set of permutations of the set $[n]=\{1,\ldots,n\}$. If $\pi$ is a permutation of length $n$, then the \emph{normalization} of $\pi$ is the permutation in $S_n$ obtained by replacing the $i^\text{th}$-smallest entry in $\pi$ with $i$ for all $i\in[n]$. Given $\tau\in S_m$, we say a permutation $\sigma=\sigma_1\cdots\sigma_n$ \emph{contains the pattern} $\tau$ if there exist indices $i_1<\cdots<i_m$ in $[n]$ such that the normalization of $\sigma_{i_1}\cdots\sigma_{i_m}$ is $\tau$. We say $\sigma$ \emph{avoids} $\tau$ if it does not contain $\tau$. Let $\Av_n(\tau^{(1)},\ldots,\tau^{(r)})$ denote the set of permutations in $S_n$ that avoid the patterns $\tau^{(1)},\ldots,\tau^{(r)}$. Let $\Av(\tau^{(1)},\ldots,\tau^{(r)})=\bigcup_{n\geq 0}\Av_n(\tau^{(1)},\ldots,\tau^{(r)})$. 

We let $\VHC(\pi)$ denote the set of valid hook configurations of a permutation $\pi$ (defined in Section~\ref{Sec:VHCs}). Given a set $A$ of permutations, let $\VHC(A)=\bigcup_{\pi\in A}\VHC(\pi)$. Define the \emph{tail length} of a permutation $\pi=\pi_1\cdots\pi_n\in S_n$, denoted $\tl(\pi)$, to be the smallest nonnegative integer $\ell$ such that $\pi_{n-\ell}\neq n-\ell$. We make the convention that $\tl(123\cdots n)=n$. If $\tl(\pi)=\ell$, then the \emph{tail} of $\pi$ is the list of points $(n-\ell+1,n-\ell+1),\ldots,(n,n)$. 

An \emph{interval} of a poset $P$ is an ordered pair $(x,y)$ of elements of $P$ such that $x\leq y$. Let $\Int(P)$ denote the set of intervals in the poset $P$. We let $\mathcal M_n^S$, $\mathcal M_n^C$, and $\mathcal M_n^T$ be the posets defined on ${\bf M}_n$ in Section \ref{Sec:Motzkin}. We also let $\mathcal M_n^A$ denote the antichain on ${\bf M}_n$. Note that $|\Int(\mathcal M_n^A)|=|{\bf M}_n|=M_n$. 

\subsection{Summary of Main Results}

In Section \ref{Sec:312}, we produce, for each positive integer $n$, a bijection\footnote{The symbol $\DL$ is pronounced ``double lambda.''} \[\widehat\DL_n:\VHC(\Av_n(312))\to\Int(\mathcal M_{n-1}^C).\] This is an extension of a bijection between $312$-avoiding uniquely sorted permutations (equivalently, valid hook configurations of $312$-avoiding permutations in $S_n$ with $\frac{n-1}{2}$ descents) and intervals in Stanley lattices that was established in \cite{DefantCatalan}. We also give a recurrence that specifies the numbers $|\VHC(\Av_n(312))|$. We make a conjecture that links these numbers with certain lattice walks in the first quadrant that were studied in \cite{Bostan} and \cite{Bousquet5}. In Section \ref{Sec:132}, we prove that \[\sum_{n\geq 1}|\VHC(\Av_n(132))|x^n=\sum_{n\geq 1}|\VHC(\Av_n(231))|x^n=\sum_{n\geq 1}|\Int(\mathcal M_{n-1}^T)|x^n.\] We will see that this generating function is algebraic of degree $5$, and we will derive an asymptotic formula for its coefficients. In Section \ref{Sec:Pairs}, we prove that \[|\VHC(\Av_n(132,231))|=|\VHC(\Av_n(132,312))|=|\VHC(\Av_n(231,312))|=M_{n-1}\] for every positive integer $n$. We can interpret this last result as the statement that certain valid hook configurations are in bijection with intervals of the antichain $\mathcal M_{n-1}^A$. In Sections \ref{Sec:132,321}--\ref{Sec:1243}, we prove the following enumerative results: 
\[\sum_{n\geq 0}|\VHC(\Av_n(132,321))|x^n=\frac{1-3x+3x^2}{(1-x)^4},\] 
\[\sum_{n\geq 0}|\VHC(\Av_n(231,321))|x^n=\frac{1-2x+2x^2-\sqrt{1-4x+4x^2-4x^3+4x^4}}{2x^2},\] \[|\VHC(\Av_n(312,321))|=\sum_{k=0}^{\left\lfloor\frac{n-1}{2}\right\rfloor}\frac{1}{2k+1}{n-k-1\choose k}{n\choose 2k}\quad\text{for every } n\geq 1,\]
\[\sum_{n\geq 0}|\VHC(\Av_n(231,1243))|x^n=1+\frac{2x^2}{3x-1+\sqrt{1-2x-3x^2}}.\] 

The sequences given by these formulas and generating functions are (essentially) the sequences A050407, A025273, A101785, A005773 in the OEIS \cite{OEIS}. According to the OEIS entries, the sequence A101785 counts, among other things, noncrossing partitions whose blocks are all of odd size. The sequence A005773 has numerous combinatorial interpretations. For example, its $n^\text{th}$ term counts directed animals of size $n$, $n$-digit base-$3$ numbers with digit sum $n$, involutions in $S_{2n-2}$ that are invariant under the reverse-complement map and have no decreasing subsequences of length $4$, and minimax elements in the affine Weyl group of the Lie algebra $\mathfrak{so}(2n+1)$.   

\section{Motzkin Posets}\label{Sec:Motzkin}

Every Motzkin path $\Lambda=\Lambda_1\cdots\Lambda_n\in{\bf M}_n$ can be written uniquely in the form \linebreak $X_1D^{\gamma_1}X_2D^{\gamma_2}\cdots X_mD^{\gamma_m}$ for some $X_1,\ldots,X_m\in\{U,E\}$. Note that $n-m$ is the number of $D$'s and also the number of $U$'s appearing in the word. For $j\in\{1,\ldots,m\}$, we define $\lon_j(\Lambda)$ as follows. If $X_j=E$, then $\lon_j(\Lambda)=-1$. If $X_j=U$ and $X_j$ is the $r^\text{th}$ letter in the word $\Lambda$ (so $\sum_{\ell=1}^{j-1}(\gamma_\ell+1)=r-1$), let $\lon_j(\Lambda)$ be the smallest nonnegative integer $t$ such that $\Lambda_{r+1}\cdots\Lambda_{r+t+1}$ contains more $D$'s than $U$'s. We call $(\lon_1(\Lambda),\ldots,\lon_m(\Lambda))$ the \emph{longevity sequence} of $\Lambda$. The \emph{class} of $\Lambda$, denoted $\cl(\Lambda)$, is the set of indices $j\in\{1,\ldots,m\}$ such that $X_j=E$. For example, if $\Lambda=UEDUUEDUDED$ is the Motzkin path in Figure \ref{Fig3}, then the longevity sequence of $\Lambda$ is $(1,-1,6,1,-1,0,-1)$, and the class of $\Lambda$ is $\cl(\Lambda)=\{2,5,7\}$. 

\begin{figure}[h]
\begin{center}
\includegraphics[width=.3\linewidth]{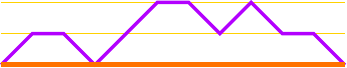}
\caption{The Motzkin path $UEDUUEDUDED$.}
\label{Fig3}
\end{center}  
\end{figure}

Although we will not actually use the definition of the Motzkin-Tamari poset $\mathcal M_n^T$, we state it for the sake of completeness. This definition is not identical to the one given in \cite{Fang2}, but the results in that paper can be used to prove the equivalence of the different definitions. 

\begin{definition}\label{Def1}
Given Motzkin paths $\Lambda,\Lambda'\in{\bf M}_n$, we write $\Lambda\leq_T\Lambda'$ if $\cl(\Lambda)=\cl(\Lambda')$ and $\lon_j(\Lambda)\leq\lon_j(\Lambda')$ for every positive integer $j$ for which $\lon_j(\Lambda)$ and $\lon_j(\Lambda')$ are defined. Let $\mathcal M_n^T$ be the poset $({\bf M}_n,\leq_T)$. 
\end{definition}

The Hasse diagram of $\mathcal M_n^T$ has multiple connected components; two Motzkin paths are in the same component if and only if they have the same class. Fang \cite{Fang2} proved that each of these components is isomorphic to an interval in a classical Tamari lattice. It is straightforward to show that $\Lambda\leq_S\Lambda'$ whenever $\Lambda\leq_T\Lambda'$. In other words, $\mathcal M_n^T$ is a subposet of $\mathcal M_n^S$. We now define a new partial order on ${\bf M}_n$ that serves as a natural intermediate step between $\leq _T$ and $\leq_S$. 

\begin{definition}\label{Def2}
Given Motzkin paths $\Lambda,\Lambda'\in{\bf M}_n$, we write $\Lambda\leq_C\Lambda'$ if $\cl(\Lambda)=\cl(\Lambda')$ and $\Lambda\leq_S\Lambda'$. Let $\mathcal M_n^C$ be the poset $({\bf M}_n,\leq_C)$.
\end{definition}

\section{Valid Hook Configurations}\label{Sec:VHCs}
The \emph{plot} of a permutation $\pi=\pi_1\cdots\pi_n$ is obtained by plotting the points $(i,\pi_i)$ for all $i\in[n]$. A \emph{descent} of $\pi$ is an index $i\in[n-1]$ such that $\pi_i>\pi_{i+1}$. If $i$ is a descent of $\pi$, then we call the point $(i,\pi_i)$ a \emph{descent top of the plot of} $\pi$. 

\begin{figure}[h]
  \centering
  \subfloat[]{\includegraphics[width=0.18\textwidth]{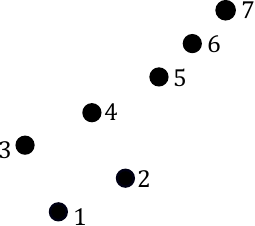}}
  \hspace{1.5cm}
  \subfloat[]{\includegraphics[width=0.18\textwidth]{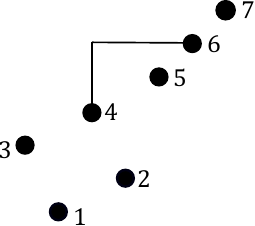}}
  \caption{The left image is the plot of $3142567$. The right image shows this plot along with a single hook.}\label{Fig2}
\end{figure}

A \emph{hook} of $\pi$ is drawn by starting at a point $(i,\pi_i)$ in the plot of $\pi$, drawing a line segment vertically upward, and then drawing a line segment horizontally to the right until reaching another point $(j,\pi_j)$. This only makes sense if $i<j$ and $\pi_i<\pi_j$. The point $(i,\pi_i)$ is called the \emph{southwest endpoint} of the hook, while $(j,\pi_j)$ is called the \emph{northeast endpoint}. The right image in Figure \ref{Fig2} shows the plot of $3142567$ along with a single hook whose southwest endpoint is $(3,4)$ and whose northeast endpoint is $(6,6)$. 

\begin{definition}\label{Def9}
Let $\pi$ be a permutation of length $n$ with $k$ descents, say $d_1<\cdots<d_k$. A \emph{valid hook configuration} of $\pi$ is a tuple $(H_1,\ldots,H_k)$ of hooks of $\pi$ that satisfies the following constraints: 

\begin{enumerate}[1.]
\item For every $i\in[n]$, the southwest endpoint of the hook $H_i$ is the descent top $(d_i,\pi_{d_i})$. 

\item A point in the plot of $\pi$ cannot lie directly above a hook. 

\item Hooks cannot intersect or overlap each other except in the case that the northeast endpoint of one hook is the southwest endpoint of the other. 
\end{enumerate}  
Let $\VHC(\pi)$ denote the set of valid hook configurations of $\pi$. We make the convention that a valid hook configuration includes its underlying permutation as part of its definition. In other words, $\VHC(\pi)$ and $\VHC(\pi')$ are disjoint whenever $\pi$ and $\pi'$ are distinct. Furthermore, we agree that every increasing permutation (including the empty permutation) has exactly one valid hook configuration (which has no hooks). 
\end{definition}

Figure \ref{Fig9} shows four arrangements of hooks that cannot appear in a valid hook configuration. Figure \ref{Fig4} shows all of the valid hook configurations of $3142567$. 

\begin{figure}[h]
\begin{center}
\includegraphics[width=.66\linewidth]{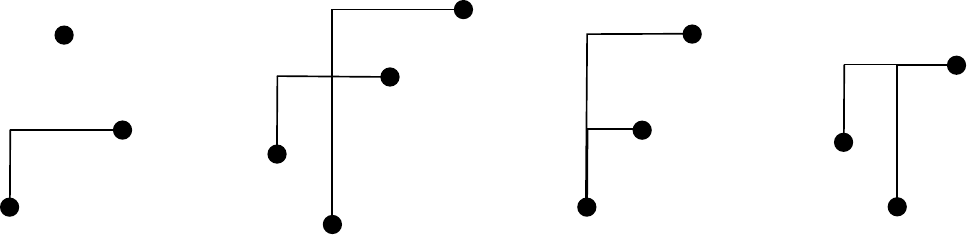}
\caption{Four placements of hooks that are forbidden in a valid hook configuration.}
\label{Fig9}
\end{center}  
\end{figure}

\begin{figure}[h]
\begin{center}
\includegraphics[width=.61\linewidth]{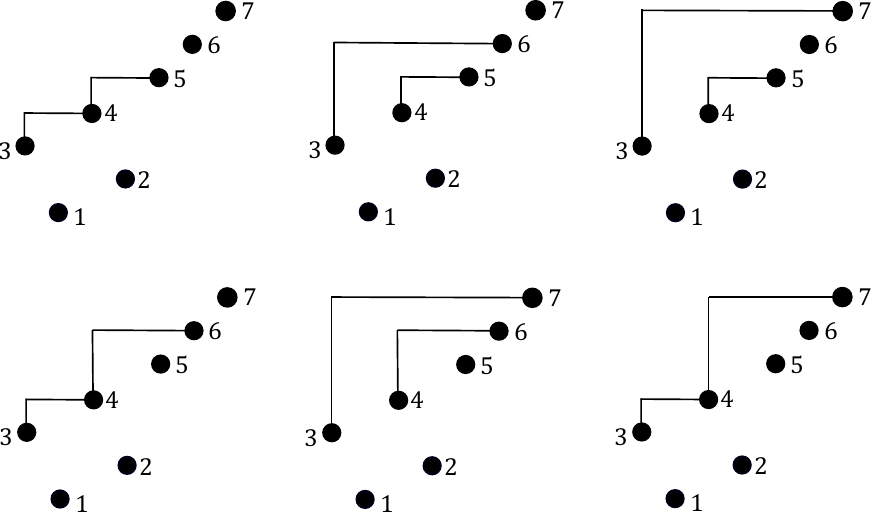}
\caption{The permutation $3142567$ has $6$ valid hook configurations.}
\label{Fig4}
\end{center}  
\end{figure}

Suppose $H$ is a hook of a permutation $\pi$ with southwest endpoint $(i,\pi_i)$ and northeast endpoint $(j,\pi_j)$. Let $\VHC^H(\pi)$ be the set of all valid hook configurations of $\pi$ that include the hook $H$. Assume that $j$ is larger than every descent of $\pi$. The hook $H$ separates $\pi$ into two parts. One part, which we call the \emph{$H$-unsheltered subpermutation of $\pi$} and denote by $\pi_U^H$, is $\pi_1\cdots\pi_i\pi_{j+1}\cdots\pi_n$. The other part, which we call the \emph{$H$-sheltered subpermutation of $\pi$} and denote by $\pi_S^H$, is $\pi_{i+1}\cdots\pi_{j-1}$. Note that the entry $\pi_j$ does not appear in either of these two parts. This decomposition of $\pi$ into the $H$-unsheltered and $H$-sheltered subpermutations provides a useful decomposition of valid hook configurations in $\VHC^H(\pi)$, which we state in the following lemma. We use Figure \ref{Fig14} as a substitute for the proof of this lemma, leaving the details to the reader.  

\begin{lemma}\label{Lem2}
Let $\pi$ be a permutation with descents $d_1<\cdots<d_k$. If $H$ is a hook of $\pi$ with northeast endpoint $(j,\pi_j)$ and $j>d_k$, then there exists a bijection \[\varphi^H:\VHC^H(\pi)\to\VHC(\pi_U^H)\times\VHC(\pi_S^H).\] 
\end{lemma}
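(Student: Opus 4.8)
The plan is to construct the bijection $\varphi^H$ explicitly by reading off, from any valid hook configuration $C\in\VHC^H(\pi)$, the sub-configurations that $C$ induces on the two pieces $\pi_U^H$ and $\pi_S^H$, and then to verify that this restriction map is well-defined, and that it is invertible. First I would observe that since $j>d_k$, the northeast endpoint $(j,\pi_j)$ of $H$ is not a descent top, so $H$ is the hook emanating from some descent top $(d_i,\pi_{d_i})$ with $d_i=i$, and the entry $\pi_j$ itself carries no hook of its own. The key structural fact to extract is that the constraint ``no point lies directly above a hook'' forces the $H$-sheltered entries $\pi_{i+1},\dots,\pi_{j-1}$ to all lie \emph{below} the horizontal segment of $H$ (hence below height $\pi_j$), and to lie within the horizontal span of $H$; dually, every other hook of $C$ must have both endpoints among the sheltered entries, or both among the unsheltered entries, and cannot straddle $H$. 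This ``no straddling'' claim is the heart of the argument and follows from constraint 3 together with the fact that $\pi_j$ sits at the top-right corner of $H$.

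Next I would define the map. Given $C=(H,H_{i_1},\dots)\in\VHC^H(\pi)$, partition the remaining hooks according to whether their southwest endpoint is a descent top of $\pi_S^H$ or of $\pi_U^H$; by the no-straddling claim each remaining hook is entirely sheltered or entirely unsheltered, so this partition is total. I would then check that the descents of $\pi_U^H$ and $\pi_S^H$ correspond exactly to the descents of $\pi$ other than $d_i$: the descent $d_i=i$ disappears in $\pi_U^H$ because concatenating $\pi_1\cdots\pi_i$ with $\pi_{j+1}\cdots\pi_n$ creates an ascent at the junction (since $\pi_i<\pi_j<\pi_{j+1}$, using that $j>d_k$ so $\pi_{j+1}>\pi_j$, or more carefully that no descent occurs past $d_k$), while every other descent of $\pi$ lands in precisely one of the two pieces. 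Thus the number of hooks assigned to each piece equals its number of descents, and the assigned hooks satisfy constraints 1--3 \emph{within} that piece because these constraints are local and the geometry is inherited from $C$. This yields a pair $(C_U,C_S)\in\VHC(\pi_U^H)\times\VHC(\pi_S^H)$, defining $\varphi^H$.

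For surjectivity and injectivity I would exhibit the inverse: given $(C_U,C_S)$, reinsert the entry $\pi_j$ and the hook $H$, reconstructing $\pi$ from $\pi_U^H$ and $\pi_S^H$ by the reverse of the splitting, and overlay the hooks of $C_U$ and $C_S$. One must check that the recombined object is again a valid hook configuration of $\pi$: constraints 1 and 3 are immediate since we have merely repositioned disjoint pieces and added the single hook $H$, which touches the other hooks only at shared endpoints; the only genuine verification is constraint 2, namely that no point lies above a hook after recombination. This reduces to confirming that $H$ itself shelters exactly the entries $\pi_{i+1},\dots,\pi_{j-1}$ and that the sheltered entries, which all lie below height $\pi_j$, do not rise above any hook of $C_U$ nor cause any unsheltered point to sit above $H$ — which is guaranteed by the height bound established at the start. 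Since restriction and reinsertion are mutually inverse operations on the underlying data, $\varphi^H$ is a bijection.

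The main obstacle I anticipate is the no-straddling claim together with the verification of constraint 2 under recombination: these are where the subtle interaction between the ``nothing above a hook'' condition and the non-overlapping condition must be handled carefully, and where the hypothesis $j>d_k$ is genuinely used (to ensure $\pi_j$ is not a descent top and that the junction in $\pi_U^H$ is an ascent, so that no spurious descent — and hence no spurious hook — is created). The authors indicate that Figure~\ref{Fig14} is meant to make these geometric checks visually evident, so I would lean on a careful case analysis of where a hypothetical straddling or overhanging hook would have to go and derive a contradiction with constraint 2 or 3 in each case, rather than a lengthy computation.
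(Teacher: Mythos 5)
Your restriction/recombination argument is exactly what the paper intends: the paper offers no written proof at all, only Figure \ref{Fig14} with ``details left to the reader,'' and your no-straddling claim, the descent bookkeeping at the junction of $\pi_U^H$, and the observation that Conditions 1--3 are local and inherited are precisely the content of that figure. The forward direction of your argument is correct as written.

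There is, however, one genuine gap, and it sits exactly where you place the weight of the inverse construction. You derive the height bound ``every sheltered entry lies below $\pi_j$'' from Condition 2 of Definition \ref{Def9} applied to a configuration that \emph{already belongs to} $\VHC^H(\pi)$; but to verify that the recombination of an arbitrary pair $(C_U,C_S)$ is valid, you need this bound as a property of $\pi$ and $H$ alone, and it is not one. If some sheltered $\pi_\ell$ exceeds $\pi_j$, then $\VHC^H(\pi)=\emptyset$ while the product can be nonempty, and no bijection exists. Concretely, take $\pi=21534$ and let $H$ run from $(1,2)$ to $(4,3)$: here $d_1=1$, $d_2=3$, $j=4>d_2$, and $(1,2)$ is a descent top, yet $(3,5)$ lies directly above $H$, so $\VHC^H(\pi)=\emptyset$, whereas $\pi_U^H=24$ and $\pi_S^H=15$ are both increasing and contribute one valid hook configuration each. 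Note that your intermediate check that the descents of $\pi_U^H$ and $\pi_S^H$ account for all descents of $\pi$ other than the one at $H$'s southwest endpoint also fails in this example: the descent of $\pi$ at position $j-1=3$ disappears when $\pi_j$ is deleted, precisely because $\pi_{j-1}>\pi_j$. The repair is to add to the lemma the hypothesis that no point of the plot of $\pi$ lies above $H$ (equivalently, $\pi_\ell<\pi_j$ for all $i<\ell<j$); with that in hand, your proof is complete, your descent correspondence holds (in particular $j-1$ cannot be a descent), and the recombination check goes through. This extra hypothesis costs nothing in the paper, since in Corollary \ref{Cor1} and every subsequent application the northeast endpoint of $H$ is a point of the tail, which dominates every point to its left.
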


\begin{figure}[h]
\begin{center}
\includegraphics[width=.87\linewidth]{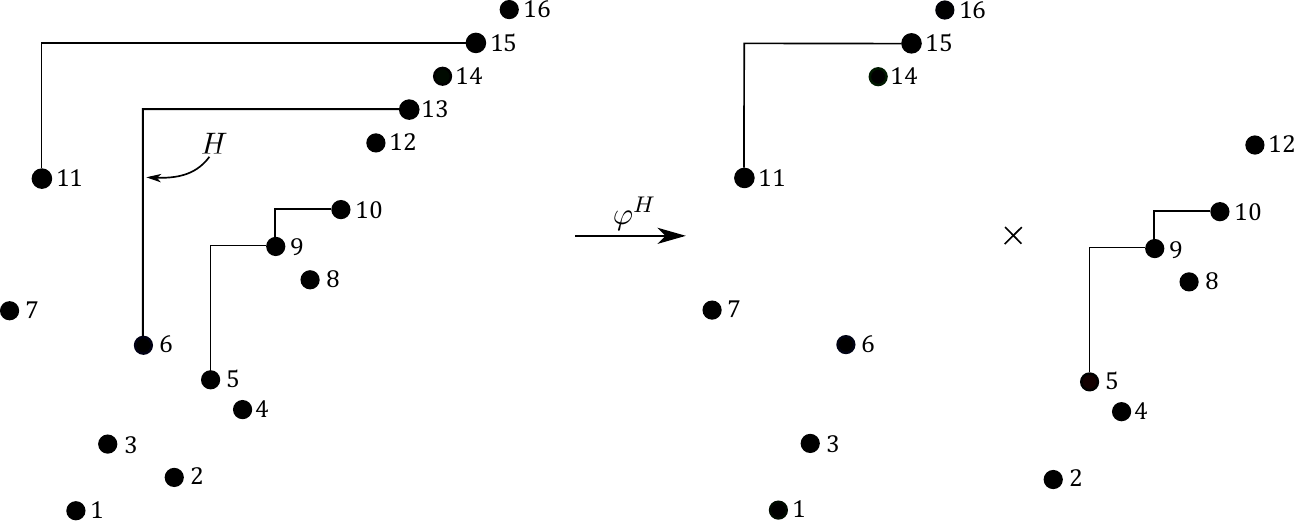}
\caption{The bijection $\varphi^H$ from Lemma \ref{Lem2}}
\label{Fig14}
\end{center}  
\end{figure}

Recall from the introduction that the tail of a permutation $\pi\in S_n$ is the list of points \linebreak $(n-\ell+1,n-\ell+1),\ldots,(n,n)$, where $\ell=\tl(\pi)$ is the tail length of $\pi$. Let $\SW_i(\pi)$ be the set of hooks of a permutation $\pi$ with southwest endpoint $(i,\pi_i)$. We say a descent $d$ of $\pi$ is \emph{tail-bound} if every hook in $\SW_d(\pi)$ has its northeast endpoint in the tail of $\pi$. The following corollary follows immediately from Lemma \ref{Lem2}. It is very closely related to the ``Decomposition Lemma" used to compute fertilities of permutations in \cite{DefantCounting} and \cite{DefantTroupes}.

\begin{corollary}\label{Cor1}
If $d$ is a tail-bound descent of a permutation $\pi\in S_n$, then \[|\VHC(\pi)|=\sum_{H\in\SW_d(\pi)}|\VHC(\pi_U^H)|\cdot|\VHC(\pi_S^H)|.\]
\end{corollary}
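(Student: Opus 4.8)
The goal is to prove Corollary~\ref{Cor1}, which states that if $d$ is a tail-bound descent of $\pi\in S_n$, then
\[|\VHC(\pi)|=\sum_{H\in\SW_d(\pi)}|\VHC(\pi_U^H)|\cdot|\VHC(\pi_S^H)|.\]
The plan is to deduce this directly from Lemma~\ref{Lem2} by partitioning $\VHC(\pi)$ according to which hook emanates from the descent top $(d,\pi_d)$. First I would observe that in any valid hook configuration of $\pi$, the descent $d$ must be the southwest endpoint of exactly one hook (by constraint~1 in Definition~\ref{Def9}, every descent top is the southwest endpoint of precisely one of the hooks $H_1,\ldots,H_k$). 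Therefore the sets $\VHC^H(\pi)$, as $H$ ranges over $\SW_d(\pi)$, are pairwise disjoint and their union is all of $\VHC(\pi)$. This gives the set partition
\[\VHC(\pi)=\bigsqcup_{H\in\SW_d(\pi)}\VHC^H(\pi),\]
and hence $|\VHC(\pi)|=\sum_{H\in\SW_d(\pi)}|\VHC^H(\pi)|$.

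Next I would check that each hook $H\in\SW_d(\pi)$ actually satisfies the hypothesis of Lemma~\ref{Lem2}, namely that its northeast endpoint $(j,\pi_j)$ has $j>d_k$, where $d_k$ is the largest descent of $\pi$. This is exactly where the \emph{tail-bound} hypothesis enters: by definition, $d$ being tail-bound means every hook in $\SW_d(\pi)$ has its northeast endpoint in the tail of $\pi$, and every point of the tail has index strictly larger than every descent (the tail consists of fixed points $(n-\ell+1,n-\ell+1),\ldots,(n,n)$ lying above and to the right of all descents, so in particular its indices exceed $d_k$). Thus for each such $H$ the index $j$ satisfies $j>d_k$, and Lemma~\ref{Lem2} supplies a bijection $\varphi^H:\VHC^H(\pi)\to\VHC(\pi_U^H)\times\VHC(\pi_S^H)$. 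Consequently $|\VHC^H(\pi)|=|\VHC(\pi_U^H)|\cdot|\VHC(\pi_S^H)|$.

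Substituting this equality into the sum from the first step yields the claimed formula, completing the proof. The argument is essentially a bookkeeping assembly of two ingredients that are already in place, so there is no genuinely hard computational step. The only point requiring care—and the one I would state explicitly rather than leave implicit—is the verification that tail-boundedness of $d$ guarantees $j>d_k$ for \emph{every} hook in $\SW_d(\pi)$, which is precisely what licenses the uniform application of Lemma~\ref{Lem2} across all terms of the sum. I would phrase this as the main (and only) obstacle, since the disjoint-union decomposition is immediate from constraint~1 of the definition of a valid hook configuration.
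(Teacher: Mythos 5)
Your proof is correct and follows exactly the route the paper intends: the paper states that Corollary~\ref{Cor1} ``follows immediately from Lemma~\ref{Lem2},'' and your argument — partitioning $\VHC(\pi)$ into the disjoint sets $\VHC^H(\pi)$ for $H\in\SW_d(\pi)$ and applying the lemma to each, with the tail-bound hypothesis guaranteeing $j>d_k$ — is precisely that deduction made explicit.
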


\section{$\VHC(\Av(312))$}\label{Sec:312}
This section begins our exploration with an analysis of valid hook configurations of $312$-avoiding permutations. We start by describing a correspondence between $312$-avoiding permutations and certain matrices. It will be convenient to first establish one quick piece of terminology. Given an $\ell\times \ell$ matrix $M=(m_{ij})$ and indices $r,r',c,c'\in\{1,\ldots,\ell\}$, consider the matrix obtained by deleting all rows of $M$ except rows $r$ and $r'$ and deleting all columns of $M$ except columns $c$ and $c'$. We say this new matrix is a \emph{lower $2\times 2$ submatrix} of $M$ if $\ell+1-c\leq r<r'$ and $c<c'$.

Fix $n\geq 1$ and a permutation $\pi=\pi_1\cdots\pi_n\in\Av_n(312)$ such that $\VHC(\pi)$ is nonempty. We must have $\pi_n=n$ (if $\pi_d=n$ with $d<n$, then the point $(d,\pi_d)$ would be a descent top, but it could not be the southwest endpoint of a hook). A \emph{left-to-right maximum of the plot of $\pi$} is a point in the plot of $\pi$ that is higher up than every point to its left. Let $\mathfrak R_0,\ldots,\mathfrak R_\ell$ be these left-to-right maxima listed in order from right to left (so $\mathfrak R_0=(n,n)$ and $\mathfrak R_\ell=(1,\pi_1)$). It will be convenient to let $\mathfrak R_{\ell+1}=(0,0)$, although this is not a point in the plot of $\pi$. Let $M(\pi)=(m_{ij})$ be the $\ell\times \ell$ matrix in which $m_{ij}$ is the number of points in the plot of $\pi$ lying strictly vertically between $\mathfrak R_i$ and $\mathfrak R_{i+1}$ and strictly horizontally between $\mathfrak R_{\ell-j}$ and $\mathfrak R_{\ell-j+1}$. See Figure \ref{Fig5} for an example. 

\begin{figure}[h]
\[
\begin{array}{l}\includegraphics[width=.22\linewidth]{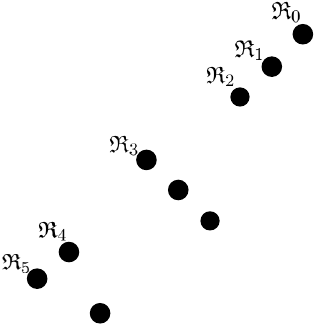}\end{array}\quad M(\pi)=\left( \begin{array}{ccccc}
0 & 0 & 0 & 0 & 0 \\
0 & 0 & 0 & 0 & 0 \\
0 & 0 & 2 & 0 & 0 \\
0 & 0 & 0 & 0 & 0 \\
0 & 1 & 0 & 0 & 0 \end{array} \right)\]
\caption{The matrix $M(\pi)$, where $\pi=231654789$.}
\label{Fig5}
\end{figure} 

\begin{remark}\label{Rem1}
Note that $m_{ij}=0$ whenever $j\leq \ell-i$. Furthermore, in every lower $2\times 2$ submatrix of $M(\pi)$, either the bottom left entry or the top right entry is $0$. Indeed, this follows from the fact that $\pi$ avoids $312$. We can easily reconstruct the plot of the permutation $\pi$ from the matrix $M(\pi)$ by noting that the points lying horizontally between two consecutive left-to-right maxima must be decreasing in height from left to right and that the same must be true of points lying vertically between two left-to-right maxima. \hspace*{\fill}$\lozenge$  
\end{remark}

Let us now choose a valid hook configuration $\mathcal H\in\VHC(\pi)$. Note that every northeast endpoint of a hook in $\mathcal H$ is a left-to-right maximum of the plot of $\pi$. Indeed, this follows from Condition 2 in Definition \ref{Def9} and the fact that $\pi$ avoids $312$. Let $\gamma_i$ be the sum of the entries in column $\ell-i+1$ of $M(\pi)$, and let $\gamma_i'$ be the sum of the entries in row $i$ of $M(\pi)$. Because $m_{ij}=0$ whenever $j\leq \ell-i$, we must have 
\begin{equation}\label{Eq1}
\gamma_1+\cdots+\gamma_p\geq\gamma_1'+\cdots+\gamma_p'\quad\text{for every }p\in\{1,\ldots,\ell\}.
\end{equation}
If $\mathfrak R_{j-1}$ is the northeast endpoint of a hook in $\mathcal H$, let $X_j=U$; otherwise, let $X_j=E$. Let $\Lambda=X_1D^{\gamma_1}X_2 D^{\gamma_2}\cdots X_\ell D^{\gamma_\ell}$ and $\Lambda'=X_1D^{\gamma_1'}X_2 D^{\gamma_2'}\cdots X_\ell D^{\gamma_\ell'}$. Finally, let $\widehat\DL(\mathcal H)=(\Lambda,\Lambda')$. 

We claim that $(\Lambda,\Lambda')\in\Int(\mathcal M_{n-1}^C)$. To see this, let $\Lambda=\Lambda_1\cdots\Lambda_{n-1}$. Recall that the southwest endpoints of the hooks of $\mathcal H$ are precisely the descent tops of the plot of $\pi$. For $p\in\{1,\ldots,\ell\}$, let $Y_p$ be the set of points in the plot of $\pi$ lying to the right of $\mathfrak R_p$. One can easily check that $\gamma_1+\cdots+\gamma_p$ is the number of southwest endpoints of hooks in $\mathcal H$ that are in $Y_p\cup\{\mathfrak R_p\}$. Similarly, the number of $U$'s in $X_1\cdots X_p$ is the number of northeast endpoints of hooks in $\mathcal H$ that are in $Y_p$. Since each southwest endpoints in $Y_p\cup\{\mathfrak R_p\}$ belongs to a hook whose northeast endpoint is in $Y_p$, we see that $\gamma_1+\cdots+\gamma_p$ is at most the number of $U$'s in $X_1\cdots X_p$. This is true for every $p\in\{1,\ldots,\ell\}$, so $\Lambda$ is a Motzkin path. It now follows from \eqref{Eq1} that $\Lambda'$ is also a Motzkin path and that $\Lambda\leq_S\Lambda'$. Finally, an index $i$ appears in the class of $\Lambda$ if and only if $X_i=E$. This occurs if and only if $i$ is in the class of $\Lambda'$. It follows that $\cl(\Lambda)=\cl(\Lambda')$, so $\Lambda\leq_C\Lambda'$. We have now defined a map $\widehat\DL_n:\VHC(\Av_n(312))\to\Int(\mathcal M_{n-1}^C)$. 
\begin{theorem}\label{Thm2}
For each positive integer $n$, the map \[\widehat\DL_n:\VHC(\Av_n(312))\to\Int(\mathcal M_{n-1}^C)\] is a bijection. 
\end{theorem}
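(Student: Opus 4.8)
The plan is to prove that $\widehat\DL_n$ is a bijection by constructing an explicit inverse, rather than by separately verifying injectivity and surjectivity. Given an interval $(\Lambda,\Lambda')\in\Int(\mathcal M_{n-1}^C)$, I would recover the data $(X_1,\ldots,X_\ell)$, $(\gamma_1,\ldots,\gamma_\ell)$, and $(\gamma_1',\ldots,\gamma_\ell')$ by reading off the unique decompositions $\Lambda = X_1 D^{\gamma_1}\cdots X_\ell D^{\gamma_\ell}$ and $\Lambda' = X_1 D^{\gamma_1'}\cdots X_\ell D^{\gamma_\ell'}$ guaranteed by the opening paragraph of Section~\ref{Sec:Motzkin}. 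The fact that $\cl(\Lambda)=\cl(\Lambda')$ is exactly what makes the two paths share the \emph{same} sequence $X_1\cdots X_\ell$ of non-down steps, so the two decompositions are compatible and both quantities $\gamma_i$, $\gamma_i'$ are well defined from the single step-type sequence. The first main task is to reconstruct the permutation $\pi$ from $(\gamma_1',\ldots,\gamma_\ell')$ alone.

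For the reconstruction of $\pi$, I would use Remark~\ref{Rem1}: the matrix $M(\pi)$ is determined by its row sums and the $312$-avoidance constraint together with the support condition $m_{ij}=0$ for $j\le \ell-i$, and the plot of $\pi$ is in turn determined by $M(\pi)$ since points between consecutive left-to-right maxima decrease in height. Concretely, $\gamma_i'$ is the number of non-maximal points sitting vertically between the left-to-right maxima $\mathfrak R_i$ and $\mathfrak R_{i+1}$, and the entries $X_j$ tell us how many left-to-right maxima there are and which role each plays; the inequalities \eqref{Eq1}, which are equivalent to $\Lambda'$ being a Motzkin path with the prescribed step sequence, guarantee that a consistent placement of points exists. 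I would argue that for a $312$-avoiding permutation the ``staircase'' placement forced by Remark~\ref{Rem1} is unique, so $\pi$ is uniquely determined by $\Lambda'$; this recovers a well-defined permutation $\pi\in\Av_n(312)$ whose row-sum vector is $(\gamma_1',\ldots,\gamma_\ell')$.

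Having fixed $\pi$, the second task is to reconstruct the hook configuration $\mathcal H$ from $\Lambda$ (equivalently, from the column sums $\gamma_i$ together with the positions where $X_j=U$). The key point is that in a $312$-avoiding permutation every northeast endpoint of a hook must be a left-to-right maximum, and the southwest endpoints are forced to be the descent tops; so specifying, for each left-to-right maximum $\mathfrak R_{j-1}$, whether it is a northeast endpoint (encoded by $X_j=U$), together with the column sums $\gamma_i$ recording how many descent tops lie to the right of $\mathfrak R_i$, should pin down which descent top connects to which maximum. Here I would lean on the interval condition $\Lambda\le_S\Lambda'$ and the running comparison in the paragraph preceding Theorem~\ref{Thm2}: the inequality between partial sums of the $\gamma_i$ and the number of $U$'s in $X_1\cdots X_p$ is precisely what certifies that every descent top can be assigned a valid (non-crossing, non-overlapping, nothing-above) hook, and I expect the assignment to be forced greedily from right to left. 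I would then check that the tuple of hooks so produced satisfies Conditions~1--3 of Definition~\ref{Def9}, and that $\widehat\DL$ sends it back to $(\Lambda,\Lambda')$.

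The main obstacle I anticipate is the second task: showing that the combinatorial data in $\Lambda$ (the $U/E$ pattern plus column sums) determines the hooks \emph{uniquely} and that the resulting hooks are genuinely valid. Verifying uniqueness amounts to checking that no two distinct valid hook configurations of the same $\pi$ can yield the same $(\Lambda,\Lambda')$, which I would handle by an induction peeling off the rightmost hook — this is where Lemma~\ref{Lem2} and its decomposition $\pi = \pi^H_U, \pi^H_S$ become the natural engine, since the hook whose northeast endpoint is $\mathfrak R_0=(n,n)$ has $j=n>d_k$ and so the hypothesis of the lemma is met. An induction on $n$ (or on $\ell$) using $\varphi^H$ to split both the permutation and the pair of Motzkin paths into an unsheltered and a sheltered piece, with $\widehat\DL$ restricting to the analogous bijections on each piece, seems the cleanest route and avoids most of the delicate geometric case-checking.
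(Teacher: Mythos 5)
Your overall strategy---building an explicit inverse by first recovering $\pi$ and then recovering the hooks---is the same as the paper's, and your second step is essentially right: once $\pi$ and the set of left-to-right maxima serving as northeast endpoints (the positions with $X_j=U$) are fixed, the hooks are forced by Conditions 1--3 of Definition \ref{Def9}, and the fact that $\Lambda$ is a Motzkin path is exactly what certifies that every descent top finds a northeast endpoint. The genuine gap is in your first step. You propose to reconstruct $\pi$ ``from $(\gamma_1',\ldots,\gamma_\ell')$ alone,'' i.e.\ from the row sums of $M(\pi)$, asserting that the staircase placement of Remark \ref{Rem1} is then unique. This is false: the row sums record only how many points lie \emph{vertically} between consecutive left-to-right maxima, not in which horizontal strips they sit, and distinct $312$-avoiding permutations (with the same $\ell$) can share the row-sum vector while having different column-sum vectors. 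For instance, with $\ell=2$ and row sums $(1,1)$, both $\begin{pmatrix}0&1\\1&0\end{pmatrix}$ and $\begin{pmatrix}0&1\\0&1\end{pmatrix}$ satisfy conditions (i) and (iv) of the matrix characterization, and they come from different permutations. The permutation is pinned down only by the row sums \emph{and} the column sums together --- that is, by $\Lambda'$ and $\Lambda$ jointly --- along with the support condition and the ``lower $2\times 2$ submatrix'' condition encoding $312$-avoidance.

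This is not a cosmetic omission: the existence and uniqueness of a matrix with prescribed row sums, prescribed column sums, triangular support, and the $2\times2$ condition is the entire technical content of the paper's proof. Existence (surjectivity) is supplied by a transportation-type lemma (Lemma 5.2 of \cite{DefantCatalan}), whose hypotheses are precisely the partial-sum inequalities coming from $\Lambda\leq_S\Lambda'$ and the equality of total sums. Uniqueness (injectivity) is proved by an infinite-descent argument: two distinct such matrices with equal row and column sums would yield strictly decreasing sequences of row and column indices, a contradiction. Your proposal contains neither argument, and the place where they are needed is exactly where you assert (without justification, and incorrectly) that the row sums suffice. Your closing suggestion of an induction peeling off the hook into $(n,n)$ via Lemma \ref{Lem2} is a plausible alternative engine, but as sketched it does not repair the reconstruction of $\pi$, which must use both coordinates of the interval $(\Lambda,\Lambda')$.
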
 

\begin{proof}
The proof relies on the following fact, which appears as Lemma 5.2 in \cite{DefantCatalan}.

\noindent {\bf Fact:} Let $a_1,\ldots,a_\ell,b_1,\ldots,b_\ell$ be nonnegative integers such that $a_1+\cdots+a_\ell=b_1+\cdots+b_\ell$ and $a_{\ell-i+1}+\cdots+a_\ell\leq b_{\ell-i+1}+\cdots+b_\ell$ for all $i\in\{1,\ldots,\ell\}$. There exists an $\ell\times \ell$ matrix $M=(m_{ij})$ with nonnegative integer entries such that 
\begin{enumerate}[(i)]
\item $m_{ij}=0$ whenever $j\leq \ell-i$;
\item the sum of the entries in column $i$ of $M$ is $b_i$ for every $i\in\{1,\ldots,\ell\}$;
\item the sum of the entries in row $i$ of $M$ is $a_{\ell-i+1}$ for every $i\in\{1,\ldots,\ell\}$;
\item in every lower $2\times 2$ submatrix of $M$, either the bottom left entry or the top right entry is $0$.  
\end{enumerate}

To prove that $\widehat\DL_n$ is surjective, fix $(\Lambda,\Lambda')\in\Int(\mathcal M_{n-1}^C)$. Because $\cl(\Lambda)=\cl(\Lambda')$, we can write $\Lambda=X_1D^{\gamma_1}X_2D^{\gamma_2}\cdots X_\ell D^{\gamma_\ell}$ and $\Lambda'=X_1D^{\gamma_1'}X_2D^{\gamma_2'}\cdots X_\ell D^{\gamma_\ell'}$ for some $X_1,\ldots,X_\ell\in\{U,E\}$. Let $a_i=\gamma_{\ell-i+1}'$ and $b_i=\gamma_{\ell-i+1}$. Because $\Lambda$ and $\Lambda'$ are Motzkin paths, we have $a_1+\cdots+a_\ell=b_1+\cdots+b_\ell$. The fact that $\Lambda\leq_S\Lambda'$ tells us that $a_{\ell-i+1}+\cdots+a_\ell\leq b_{\ell-i+1}+\cdots+b_\ell$ for all $i\in\{1,\ldots,\ell\}$. The above fact guarantees that there is a matrix $M=(m_{ij})$ satisfying the properties (i)--(iv). 
According to Remark \ref{Rem1}, we can use such a matrix to obtain a permutation $\pi\in \Av_n(312)$ such that $M(\pi)=M$ and $\pi_n=n$. We claim that there is a unique valid hook configuration $\mathcal H\in\VHC(\pi)$ such that $\widehat\DL_n(\mathcal H)=(\Lambda,\Lambda')$. Let $\mathfrak R_0,\ldots,\mathfrak R_\ell$ be the left-to-right maxima of the plot of $\pi$ listed in order from right to left, and let $A=\{\mathfrak R_i:X_{i+1}=U\}$. The northeast endpoints of the hooks in $\mathcal H$ are precisely the points in $A$. The specific choices of the hooks themselves are now determined by the conditions in Definition \ref{Def9}. We need to make sure that each descent top of the plot of $\pi$ can actually find a corresponding northeast endpoint for its hook. In other words, we need to know that for each $i\in\{1,\ldots,n-1\}$, the number of descent tops in the set $\{(n-i,\pi_{n-i}),\ldots,(n,\pi_n)\}$ is at most the number of elements of $A$ in $\{(n-i+1,\pi_{n-i+1}),\ldots,(n,\pi_n)\}$. This follows immediately from the fact that $\Lambda$ is a Motzkin path. 

To prove injectivity, let us assume there exist $\pi,\pi'\in\Av_n(312)$, $\mathcal H\in\VHC(\pi)$, and $\mathcal H'\in\VHC(\pi')$ with $\widehat\DL_n(\mathcal H)=\widehat\DL_n(\mathcal H')=(\Lambda,\Lambda')$, where $(\Lambda,\Lambda')$ is as above. Assume by way of contradiction that $\mathcal H\neq\mathcal H'$. We saw in the preceding paragraph that $\mathcal H$ is uniquely determined by $\pi$ and $(\Lambda,\Lambda')$. This means we must have $\pi\neq\pi'$. According to Remark \ref{Rem1}, the matrices $M(\pi)=(m_{ij})$ and $M(\pi')=(m_{ij}')$ uniquely determine $\pi$ and $\pi'$, respectively. Therefore, these matrices are distinct. However, both of these matrices satisfy properties (i)--(iv) from the above fact, where $a_i=\gamma_{k-i+1}'$ and $b_i=\gamma_{k-i+1}$. Because they are distinct, we can find a pair $(i_0,j_0)$ with $m_{i_0j_0}\neq m_{i_0j_0}'$. We may assume that $j_0$ was chosen maximally, which means $m_{ij}=m_{ij}'$ whenever $j>j_0$. We may assume that $i_0$ was chosen maximally after $j_0$ was chosen, meaning $m_{ij_0}=m_{ij_0}'$ whenever $i>i_0$. We may assume without loss of generality that $m_{i_0j_0}>m_{i_0j_0}'$. Because $M(\pi)$ and $M(\pi')$ satisfy property (ii), their $j_0^\text{th}$ columns have the same sum. This means that there exists $i_1\neq i_0$ with $m_{i_1j_0}<m_{i_1j_0}'$. In particular, $m_{i_1j_0}'$ is positive. The maximality of $i_0$ guarantees that $i_1<i_0$. Because $M(\pi)$ and $M(\pi')$ satisfy property (iii), their $i_1^\text{th}$ rows have the same sum. This means that there exists $j_1\neq j_0$ with $m_{i_1j_1}>m_{i_1j_0}'$. The maximality of $j_0$ guarantees that $j_1<j_0$. Since $M(\pi)$ satisfies property (i) and $m_{i_1j_1}>0$, we must have $\ell+1-j_1\leq i_1$. Now, the $j_1^\text{th}$ columns of $M(\pi)$ and $M(\pi')$ have the same sum, so there exists $i_2\neq i_1$ such that $m_{i_2j_1}<m_{i_2j_1}'$. If $i_2>i_1$, then $m_{i_2j_1}'$ and $m_{i_1j_0}'$ are positive numbers that form the bottom left and top right entries in a lower $2\times 2$ submatrix of $M(\pi)$. This is impossible since $M(\pi)$ satisfies property (iv), so we must have $i_2<i_1$. Continuing in this fashion, we find decreasing sequences of positive integers $i_0>i_1>i_2>\cdots$ and $j_0>j_1>j_2>\cdots$. This is our desired contradiction. 
\end{proof}

The preceding theorem tells us that $|\VHC(\Av_n(312))|=|\Int(\mathcal M_{n-1}^C)|$. We now turn our attention toward obtaining a recurrence for these numbers. If $\lambda=\lambda_1\cdots\lambda_\ell\in S_\ell$ and $\mu=\mu_1\ldots\mu_m\in S_m$, then the \emph{direct sum} of $\lambda$ and $\mu$, denoted $\lambda\oplus\mu$, is the permutation in $S_{\ell+m}$ obtained by ``placing $\mu$ above and to the right of $\lambda$." More formally, the $i^\text{th}$ entry of $\lambda\oplus\mu$ is \[(\lambda\oplus\mu)_i=\begin{cases} \lambda_i & \mbox{if } 1\leq i\leq \ell; \\ \mu_{i-\ell}+\ell & \mbox{if } \ell+1\leq i\leq \ell+m. \end{cases}\] A permutation is called \emph{sum indecomposable} if it cannot be written as a direct sum of two shorter permutations. Every normalized permutation $\pi$ can be written uniquely in the form $\pi=\lambda^{(1)}\oplus\cdots\oplus\lambda^{(r)}$ for some sum indecomposable permutations $\lambda^{(1)},\ldots,\lambda^{(r)}$, which are called the \emph{components} of $\pi$. Let $\comp(\pi)$ denote the number of components of $\pi$. Recall the notation from Corollary \ref{Cor1}.  

Let \[\mathcal D_{\ell,c}(n)=\{\pi\in\Av_{n+\ell}(312):\tl(\pi)=\ell,\comp(\pi)=c\}\] and \[\mathcal D_{\geq\ell,\geq c}(n)=\{\pi\in\Av_{n+\ell}(312):\tl(\pi)\geq\ell,\comp(\pi)\geq c\}.\] Let $B_{\ell,c}(n)=|\VHC(\mathcal D_{\ell,c}(n))|$ and $B_{\geq\ell,\geq c}(n)=|\VHC(\mathcal D_{\geq\ell}(n))|$. 

Suppose $\pi\in\mathcal D_{\ell,c}(n+1)$ for some $n\geq 0$ and $c\geq\ell+1$. Note that $\pi$ is not an identity permutation because it has length $\ell+n+1$ and has tail length $\ell$. Because $\pi$ avoids $312$, one can easily check that $n$ is a tail-bound descent of $\pi$. Choose $j\in\{1,\ldots,\ell\}$, and let $H$ be the hook of $\pi$ with southwest endpoint $(n,\pi_n)$ and northeast endpoint $(n+1+j,n+1+j)$. We have $\pi_U^H=\pi_1\cdots\pi_{n-1}(n+2+j)\cdots(n+\ell+1)$ and $\pi_S^H=\pi_{n+1}(n+2)\cdots(n+j)$. The increasing permutation $\pi_S^H$ has a unique valid hook configuration (with no hooks). The permutation $\pi_U^H$ has the same number of valid hook configurations as its normalization. This normalization is an element of $\mathcal D_{\geq\ell-j,\geq c-j}(n)$. Any element of $\mathcal D_{\geq\ell-j,\geq c-j}(n)$ can be the normalization of $\pi_U^H$. Combining these facts with Corollary \ref{Cor1}, we find that 
\begin{equation}\label{Eq7}
B_{\ell,c}(n+1)=\sum_{j=1}^\ell B_{\geq\ell-j,\geq c-j}(n).
\end{equation}  
Figure \ref{Fig6} illustrates this recurrence. 

\begin{figure}[h]
\begin{center}
\includegraphics[width=.42\linewidth]{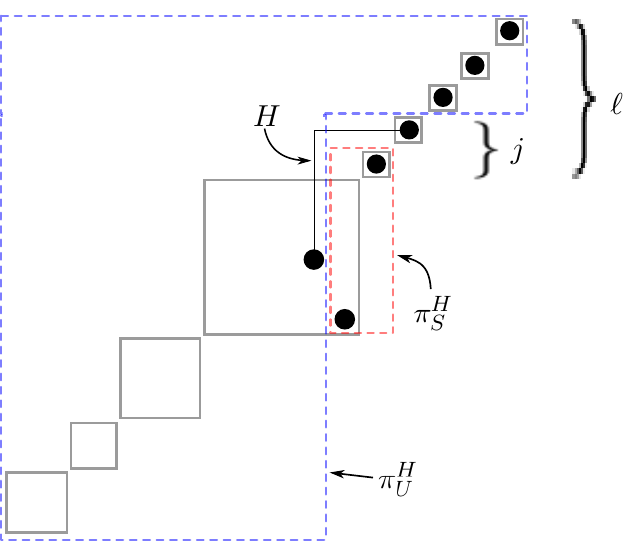}
\caption{An illustration of the recurrence in equation \eqref{Eq7}. This is an example with $c=9$, where the grey boxes represent the components of the permutation $\pi$.}
\label{Fig6}
\end{center}  
\end{figure}

If $c\geq \ell+1$, then we also have $B_{\ell,c}(0)=0$. If $c\leq\ell-1$, then $B_{\ell,c}(n)=0$. Finally, $B_{\ell,\ell}(0)=1$ and $B_{\ell,\ell}(n)=0$ for $n\neq 0$. These initial conditions and the recurrence in \eqref{Eq2} allow us to efficiently compute the values of $B_{\ell,c}(n)$. Hence, we can efficiently compute the numbers $|\VHC(\Av_n(312))|=B_{\geq 0,\geq 0}(n)$. The first few values, starting at $n=1$, are \[1, 1, 2, 5, 14, 44, 148, 528, 1972, 7647, 30605, 125801, 529131, 2270481, 9914870, 43973755, 197744417.\] We can also use this recurrence to derive the generating function equation in the following proposition; we omit the details. 

\begin{proposition}
We have \[\sum_{n\geq 0}|\VHC(\Av_n(312))|x^n=1+\sum_{n\geq 1}|\Int(\mathcal M_{n-1}^C)|x^n=Q(x,0,0),\] where $Q(x,y,z)$ is the trivariate power series satisfying \[\frac{y}{1-y}(Q(x,y,z)-Q(x,y,0))=\frac{Q(x,y,z)-1/(1-y)}{x}-\frac{Q(x,y,z)-Q(x,0,z)}{y}(z-1)\] \[-\frac{Q(x,y,0)-Q(x,0,0)}{y}-\frac{Q(x,y,z)-Q(x,y,0)}{xz}.\]
\end{proposition}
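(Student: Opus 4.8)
The plan is to encode the numbers $B_{\ell,c}(n)$ from \eqref{Eq7} in a single trivariate generating function and to convert the recurrence, together with its initial conditions, into the stated functional equation by the catalytic-variable method. Concretely, I would let $Q(x,y,z)=\sum_{n,\ell,c}B_{\geq\ell,\geq c}(n)\,x^n y^\ell z^c$ be the generating function of the \emph{cumulative} counts, so that $x$ marks the size parameter $n$, the variable $y$ marks the tail length $\ell$, and the variable $z$ marks the number of components $c$. With this normalization, setting $y=z=0$ selects $\ell=c=0$, and hence $Q(x,0,0)=\sum_{n}B_{\geq0,\geq0}(n)x^n=\sum_{n}|\VHC(\Av_n(312))|x^n$, which is the series we want; the middle equality in the proposition is then immediate from Theorem~\ref{Thm2}, together with $|\VHC(\Av_0(312))|=1$ accounting for the leading $1$.

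The advantage of the cumulative counts is that the right-hand side of \eqref{Eq7} is \emph{already} a sum of cumulative quantities evaluated at the same size argument $n$. The substitution $(\ell,c)\mapsto(\ell-j,c-j)$ there shifts the tail length and the component count in lockstep, hence preserves the component excess $c-\ell$; summing over $j\geq 1$ therefore contributes a geometric factor, and after reorganizing with the cumulation in the component variable this is what I expect to produce the prefactor $\frac{y}{1-y}$ acting on the part of $Q$ supported on $c\geq 1$, that is, on $Q-Q(x,y,0)$. The remaining task is to express the left-hand side of \eqref{Eq7}, the \emph{exact} count $B_{\ell,c}(n+1)$, in terms of $Q$. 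The division by $x$ records the size shift $n\mapsto n+1$ and strips off the $n=0$ slice; since the length-$\ell$ identity is the only permutation counted when $n=0$, that slice is exactly $\frac{1}{1-y}$, which accounts for the corresponding term. Passing from the cumulative function $Q$ back to the exact counts is carried out by the standard first-difference (``peeling'') identities, and it is these that should produce the sectioned terms $Q(x,y,0)$, $Q(x,0,z)$, $Q(x,0,0)$ together with the divisions by $y$, $z$, and $xz$; the factor $(z-1)$ multiplying $\frac{Q-Q(x,0,z)}{y}$ is the signature of a single first difference in the component variable.

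To keep the bookkeeping organized, I would work with three families: the exact counts $B_{\ell,c}(n)$, the doubly cumulative counts $B_{\geq\ell,\geq c}(n)$, and the intermediate counts $\sum_{c'\geq c}B_{\ell,c'}(n)$ that are cumulative in the component variable only. The two identities driving the translation are $B_{\geq\ell,\geq c}(n)=B_{\geq\ell+1,\geq c}(n-1)+\sum_{c'\geq c}B_{\ell,c'}(n)$, obtained by peeling off the summands of exact tail length $\ell$, and $\sum_{c'\geq c}B_{\ell,c'}(n)-\sum_{c'\geq c+1}B_{\ell,c'}(n)=B_{\ell,c}(n)$. I would transcribe each of these, along with \eqref{Eq7} and the initial conditions $B_{\ell,\ell}(0)=1$, into a generating-function identity and then combine and collect terms to reach the displayed functional equation.

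The step I expect to be the main obstacle is precisely the conversion between the exact counts on the left of \eqref{Eq7} and the cumulative counts on the right. The difficulty is that the size index $n$ corresponds to a \emph{different} total length $n+\ell$ for each tail length $\ell$, so cumulation over larger tail lengths is coupled to a shift of the size index; this coupling is what forces all of the sectioned terms and the rational prefactors to appear simultaneously. Tracking it correctly—ensuring that every boundary contribution forced by the constraint $c\geq\ell$ and by the initial conditions lands on the right section with the right sign, and that the prefactors $\frac{1}{x}$, $\frac{1}{y}$, $\frac{1}{xz}$, and $\frac{y}{1-y}$ emerge exactly as stated—is the delicate computational heart of the argument. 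Once the functional equation is verified no solving is required, since the proposition only records the equation satisfied by $Q$.
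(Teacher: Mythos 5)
Your overall strategy is the one the paper intends (the paper itself omits the details): introduce a trivariate generating function for the cumulative counts $B_{\geq\ell,\geq c}(n)$, with catalytic variables for the tail length and the component count, and translate the recurrence \eqref{Eq7} together with its initial conditions into the stated functional equation via first-difference (peeling) identities. The middle equality and the identification of $Q(x,0,0)$ are handled correctly.

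However, there is a concrete error in your normalization that would derail the verification. With $Q(x,y,z)=\sum_{n,\ell,c}B_{\geq\ell,\geq c}(n)x^ny^\ell z^{c}$, the $n=0$ slice is \emph{not} $\frac{1}{1-y}$: the only permutation counted at $n=0$ is the identity of length $\ell$, but since it has $\ell$ components it contributes to $B_{\geq\ell,\geq c}(0)$ for \emph{every} $c\in\{0,\ldots,\ell\}$, so $[x^0]Q=\sum_{\ell\geq 0}y^\ell\sum_{c=0}^{\ell}z^c=\frac{1}{1-z}\bigl(\frac{1}{1-y}-\frac{z}{1-yz}\bigr)$. Consequently the terms $\frac{Q(x,y,z)-1/(1-y)}{x}$ and $\frac{Q(x,y,z)-Q(x,y,0)}{xz}$ each acquire a genuine pole at $x=0$, and one can check that these poles do not cancel (their combined $x^{-1}$ coefficient is $\frac{y(z-1)}{(1-y)(1-yz)}\neq 0$), whereas the left-hand side of the stated equation is a power series; so the equation is simply false for your $Q$. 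The fix is already implicit in your own observation that the substitution $(\ell,c)\mapsto(\ell-j,c-j)$ preserves the component excess: you must let $z$ mark $c-\ell$ rather than $c$, i.e., take $Q(x,y,z)=\sum_{n,\ell,e\geq 0}B_{\geq\ell,\geq\ell+e}(n)x^ny^\ell z^{e}$. Then $B_{\geq\ell,\geq\ell+e}(0)=[e=0]$ gives $[x^0]Q=\frac{1}{1-y}$ exactly, $Q(x,0,0)$ is still the desired series, the sum over $j$ in \eqref{Eq7} becomes a clean convolution in $y$ alone producing $\frac{y}{1-y}\bigl(Q-Q(x,y,0)\bigr)$ (the restriction to $e\geq 1$ reflecting that \eqref{Eq7} only holds for $c\geq\ell+1$), and the peeling identities you list go through as planned. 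With that correction the rest of your outline is sound, though the actual term-by-term verification still has to be carried out.
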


Let $\mathfrak w(k)$ be the number of lattice walks of length $k$ that start and end at the origin, always stay in the first quadrant, and use the steps $(-1, 0), (-1, 1), (0, -1), (0, 1), (1, -1)$. These walks were studied in \cite{Bostan} and \cite{Bousquet5}. The first few values of $\mathfrak w(k)$ are given in the OEIS sequence A151347. The following theorem, which links valid hook configurations in $\VHC(\Av_n(312))$, intervals in the posets $\mathcal M_{n-1}^C$, and these specific lattice walks, was stated as a conjecture in the original preprint version of this article. It was subsequently proven by Maya Sankar.  

\begin{theorem}[\!\!\cite{Maya}]
For every positive integer $n$, we have
\[|\VHC(\Av_n(312))|=|\Int(\mathcal M_{n-1}^C)|=\sum_{k=0}^{n-1}{n-1\choose k}\mathfrak w(k).\]
\end{theorem}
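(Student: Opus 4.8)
The plan is to combine the bijection $\widehat\DL_n$ of Theorem \ref{Thm2} with an analysis of the intervals of $\mathcal M_{n-1}^C$ as lattice walks. Since $\widehat\DL_n$ is a bijection, it suffices to prove $|\Int(\mathcal M_{n-1}^C)|=\sum_{k=0}^{n-1}\binom{n-1}{k}\mathfrak w(k)$. The right-hand side is the binomial transform of the walk sequence, and I would first record two equivalent reformulations of it. At the level of generating functions, writing $W(t)=\sum_{k\geq0}\mathfrak w(k)\,t^k$, the identity is equivalent to
\[\sum_{n\geq1}|\Int(\mathcal M_{n-1}^C)|\,x^n=\frac{x}{1-x}\,W\!\left(\frac{x}{1-x}\right).\]
Combinatorially, $\sum_{k}\binom{m}{k}\mathfrak w(k)$ counts walks of length $m=n-1$ that begin and end at the origin and stay in the first quadrant using the five prescribed steps \emph{together with a sixth ``stay'' step} $(0,0)$: one chooses the $k$ positions carrying genuine steps (which must form an excursion) and fills the remaining $m-k$ positions with null steps. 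Thus the whole theorem reduces to producing a length-preserving bijection between $\Int(\mathcal M_{n-1}^C)$ and these null-augmented quarter-plane excursions of length $n-1$.

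To build such a bijection I would scan an interval $(\Lambda,\Lambda')$ from left to right and record a point of the quarter plane determined by the two paths, for instance the pair consisting of the height of $\Lambda$ and the vertical gap between the two paths, or a suitable linear statistic in the two heights. The five genuine steps should encode the five possible ``joint moves'' of the two paths, while the null steps should encode inert positions, such as places that are east steps of both $\Lambda$ and $\Lambda'$ simultaneously. The verifications needed are then: (i) the joint moves that actually occur match the five prescribed steps under the chosen coordinates; (ii) the walk stays in the quarter plane precisely because $\Lambda$ and $\Lambda'$ are Motzkin paths with $\Lambda\leq_S\Lambda'$; (iii) the walk is an excursion because both paths return to the horizontal axis; and (iv) the hypothesis $\cl(\Lambda)=\cl(\Lambda')$ is exactly the compatibility that makes the encoding reversible.

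The step I expect to be the main obstacle is (i), and more precisely the fact that a naive synchronous (position-by-position) reading does \emph{not} work. When the two paths interchange the order in which they take a shared up step, the synchronous increment leaves the quarter plane, producing an inadmissible ``large'' step rather than one of the five allowed steps. The heart of the argument must therefore be to encode these interchanges by means of the two diagonal steps $(1,-1)$ and $(-1,1)$, so that a crossing of the two paths is resolved into diagonal moves instead of an illegal jump; showing that this resolution is well defined, bijective, and compatible with the quarter-plane confinement is where essentially all of the difficulty lies. A warning is that intermediate shortcuts can be deceptive: for example, deleting the simultaneous east steps of an interval produces a well-defined ``reduced'' interval, and these reduced objects are equinumerous with the excursions for small length, but this peeling does \emph{not} realize the binomial transform, so the inert structure must be isolated more carefully.

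If a clean bijection proves elusive, I would fall back on an algebraic route. The preceding proposition gives a functional equation that determines $Q(x,0,0)=\sum_{n}|\VHC(\Av_n(312))|\,x^n$, and the generating function $W$ of these walks is accessible from \cite{Bostan} and \cite{Bousquet5} through the kernel method. One can then try to verify
\[Q(x,0,0)=1+\frac{x}{1-x}\,W\!\left(\frac{x}{1-x}\right)\]
by checking that the right-hand side satisfies the same catalytic equation as $Q(x,0,0)$, or by extracting and comparing the algebraic (or differential) equations satisfied by the two series. Here the obstacle is purely computational: simplifying the three-variable equation for $Q$ and reconciling it with the kernel-method expression for $W$.
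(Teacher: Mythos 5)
There is a genuine gap here, and it is worth noting first that the paper itself contains no proof of this statement: it was a conjecture in the original preprint and is stated as a theorem only by citing Sankar's subsequent work \cite{Maya}. Your reduction is correct as far as it goes --- the first equality is Theorem \ref{Thm2}, the generating-function reformulation $\sum_{n\geq 1}|\Int(\mathcal M_{n-1}^C)|x^n=\frac{x}{1-x}W\!\left(\frac{x}{1-x}\right)$ is the standard binomial-transform identity, and the interpretation of $\sum_k\binom{n-1}{k}\mathfrak w(k)$ as quarter-plane excursions of length $n-1$ augmented by a null step is sound. But the theorem is precisely the assertion that such a length-preserving bijection (or some other identification) exists, and your proposal stops exactly at that point: no coordinates are fixed, no case analysis of the ``joint moves'' is given, and you yourself flag that the naive synchronous reading fails and that resolving crossings of $\Lambda$ and $\Lambda'$ into the diagonal steps is ``where essentially all of the difficulty lies.'' A plan that names the crux without carrying it out does not establish the identity; in particular nothing in the proposal rules out that the resolution of crossings is ill-defined or fails to be injective.

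The algebraic fallback is also more problematic than you suggest. The citation to \cite{Bostan} is to a paper on \emph{non-D-finite} excursions in the quarter plane, and the excursion series $W(t)$ for this five-step model is not D-finite; consequently neither $W$ nor $\frac{x}{1-x}W\!\left(\frac{x}{1-x}\right)$ satisfies any algebraic or linear differential equation, so ``extracting and comparing the algebraic (or differential) equations satisfied by the two series'' is a dead end. The only algebraic route available is to show directly that the binomial transform of $W$ satisfies the same catalytic functional equation (in the two catalytic variables $y,z$) as $Q$ from the Proposition, together with enough initial data to pin down the solution; this requires first recasting the walk model itself as a solution of a compatible two-catalytic-variable equation and matching kernels, which is a substantial computation that the proposal does not attempt. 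Either route could in principle be completed --- Sankar's proof is bijective --- but as written the proposal establishes only the (easy) equivalences between the three formulations of the claim, not the claim itself.
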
  

\section{$\VHC(\Av(132))$ and $\VHC(\Av(231))$}\label{Sec:132}

We now consider the pattern $132$ and the pattern $231$. Our goal in this section is to prove the following theorem. 

\begin{theorem}\label{Thm1}
Let $\rho\approx 4.658905$ be the unique real root of $256x^3-645x^2-2112x-2048$, and let $\beta\approx 0.805810$ be the unique positive real root of $41472x^6-34749x^4+5472 x^2-256$. We have \[\sum_{n\geq 0}|\VHC(\Av_n(132))|x^n=\sum_{n\geq 0}|\VHC(\Av_n(231))|x^n=1+\sum_{n\geq 1}|\Int(\mathcal M_{n-1}^T)|x^n,\] and this generating function is algebraic of degree $5$. Furthermore, \[|\VHC(\Av_n(132))|=|\VHC(\Av_n(231))|\sim \frac{\beta}{\sqrt{\pi}}\cdot \frac{\rho^n}{n^{5/2}}.\] 
\end{theorem}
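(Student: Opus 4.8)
The plan is to funnel all three quantities in Theorem~\ref{Thm1} through a single functional equation with one catalytic variable, and then read off algebraicity and asymptotics from that equation by standard analytic combinatorics. First I would set up a recursion for $|\VHC(\Av_n(231))|$ in the style of Section~\ref{Sec:312}. If $\pi$ is $231$-avoiding with a nonempty valid hook configuration, its largest entry sits at a position that is a tail-bound descent, and $231$-avoidance forces every entry left of the maximum to be smaller than every entry to its right. Applying Corollary~\ref{Cor1} at this descent splits each configuration into an $H$-unsheltered and an $H$-sheltered piece, both again $231$-avoiding, and I would record the tail length as a catalytic variable $u$. The crucial difference from the $312$ analysis is that this ``left-smaller-than-right'' splitting couples only one statistic rather than two, so the bookkeeping should produce a polynomial functional equation with a \emph{single} catalytic variable, in contrast to the two-variable series $Q(x,y,z)$ of Section~\ref{Sec:312}. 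I would then show that the same equation counts $|\Int(\mathcal M_{n-1}^T)|$: by Fang's theorem each component of $\mathcal M_{n-1}^T$ is an interval of a classical Tamari lattice, so an interval of $\mathcal M_{n-1}^T$ is a nested pair inside such a Tamari interval, and decomposing Motzkin paths at their first return to the axis yields the identical recursion. This single step simultaneously gives the bijective identity $|\VHC(\Av_n(231))|=|\Int(\mathcal M_{n-1}^T)|$ and explains why $231$ produces the Tamari order $\leq_T$ while $312$ produced the coarser order $\leq_C$.

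For the coincidence $|\VHC(\Av_n(132))|=|\VHC(\Av_n(231))|$ I would run the parallel decomposition for $132$-avoiders, where the maximum splits the permutation with the larger entries on the \emph{left} --- the mirror image in value-space of the $231$ case. The reverse map interchanges the two pattern classes but turns descents into ascents, so it does \emph{not} give a count-preserving bijection on valid hook configurations; nevertheless the recursion coming out of Corollary~\ref{Cor1} is blind to this reflection, and I would verify that the $132$ decomposition satisfies exactly the same catalytic equation as the $231$ one. Hence both families share a single functional equation, which proves the first displayed identity.

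Because that equation is polynomial with one catalytic variable, the theorem of Bousquet-M\'elou and Jehanne applies: the generating function is algebraic, and eliminating the catalytic variable by the kernel method (differentiating and specializing $u$ at the kernel root, then taking a resultant) yields an explicit minimal polynomial, which a direct computation shows has degree $5$. For the asymptotics I would use singularity analysis. The growth rate $\rho$ is the reciprocal of the dominant singularity, located where the algebraic curve acquires a vertical tangent, i.e.\ where the relevant factor of the discriminant of the minimal polynomial in $F$ vanishes; this condition is exactly the cubic $256x^3-645x^2-2112x-2048$, whose relevant root is $\rho\approx 4.658905$. Expanding the solution in a Puiseux series at the singularity should reveal a singular term of order $(1-\rho x)^{3/2}$ rather than the generic $(1-\rho x)^{1/2}$; the transfer theorem of Flajolet and Odlyzko then converts the $3/2$ exponent into the $n^{-5/2}$ factor, and computing the coefficient of the singular term (using $\Gamma(-3/2)=\tfrac{4}{3}\sqrt{\pi}$) identifies the constant as $\beta/\sqrt{\pi}$, with $\beta$ the positive root of the sextic $41472x^6-34749x^4+5472x^2-256$.

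I expect the two hardest points to be the following. First, verifying that the genuinely different $132$ and $231$ splittings collapse to the \emph{same} catalytic equation: the two recursive structures are mirror images in value-space but are related by no descent-preserving symmetry, so the equality must be forced at the level of the functional equation rather than by an obvious bijection. Second, the elimination and singularity analysis: isolating the true degree-$5$ minimal polynomial (spurious factors routinely appear when clearing the catalytic variable and must be discarded), and certifying both the non-generic $3/2$ singular exponent and the exact algebraic constants $\rho$ and $\beta$ given by the stated cubic and sextic.
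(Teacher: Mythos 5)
Your core engine --- the tail-length catalytic recursion for $\VHC(\Av_n(231))$ obtained from Corollary \ref{Cor1} at the descent carrying the largest non-tail entry, the resulting quadratic functional equation in one catalytic variable, the Bousquet-M\'elou--Jehanne elimination to a degree-$5$ minimal polynomial, and the singularity analysis with the non-generic $(1/\rho-x)^{3/2}$ Puiseux term --- is exactly the paper's argument, down to the role of the cubic factor of the discriminant and the $\Gamma(-3/2)$ normalization. (One small slip: the relevant descent is at the largest \emph{non-tail} entry; when the tail is nonempty the global maximum is a fixed point, not a descent top.)

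Where you diverge from the paper is in the two supporting identities, and in both places you substitute a to-be-verified derivation for what the paper settles by citation. For $|\VHC(\Av_n(132))|=|\VHC(\Av_n(231))|$ the paper simply invokes a length-preserving bijection $\VHC(\Av(132))\to\VHC(\Av(231))$ from an earlier paper of the author; your plan to run an independent decomposition for $132$-avoiders and check that it satisfies the same catalytic equation is plausible but is genuinely extra work --- the hook structure of $132$-avoiders is not a mirror image of the $231$ case in any way that acts on descents, as you yourself note, so ``the recursion is blind to the reflection'' is an assertion, not an argument. More seriously, for the identity with $\Int(\mathcal M_{n-1}^T)$ the paper does \emph{not} produce a shared recursion: it compares the degree-$5$ polynomial for $I(x,0)$ with the algebraic equation Fang had already derived for $1+\sum_{n\geq 1}|\Int(\mathcal M_{n-1}^T)|x^n$ and checks they agree after manipulation. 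Your proposed ``first return to the axis'' decomposition of an interval is underspecified --- an interval here is a pair of Motzkin paths with equal class ordered by longevity, and it is not clear which path's first return you decompose at or why the two paths return together --- and the paper explicitly records (Remark \ref{Rem2}) that a direct combinatorial link of this kind was an open problem only later resolved by Sankar. So your outline is sound for the analytic content of the theorem, but these two steps need either the citations the paper uses or substantially more detail than you have given.
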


The proof of this theorem is similar to the proof of the formula for $|s^{-1}(\Av_n(231))|$ (which is the number of $2$-stack-sortable permutations in $S_n$) given in \cite{DefantCounting}. In fact, the recurrence relations obtained in that proof and in the following argument are identical except in their initial conditions. The proof in \cite{DefantCounting} uses the sequence of Catalan numbers for initial values, whereas we use the simpler sequence $1,1,1,\ldots$ here. What is interesting is that the generating function for $|\VHC(\Av_n(231))|$ is actually more complicated than the generating function for $|s^{-1}(\Av_n(231))|$. Both are algebraic, but their degrees are $5$ and $3$, respectively. Moreover, the radius of convergence of the former is a cubic irrational while the radius of convergence of the latter is simply $4/27$.  

\begin{proof}
In \cite{DefantFertilityWilf}, the author found a bijection $\VHC(\Av(132))\to\VHC(\Av(231))$ that preserves the lengths of the underlying permutations (and much more). Therefore,
$\sum_{n\geq 0}|\VHC(\Av_n(132))|x^n=\sum_{n\geq 0}|\VHC(\Av_n(231))|x^n$. Let \[\mathcal D_\ell(n)=\{\pi\in\Av_{n+\ell}(231):\tl(\pi)=\ell\}\quad\text{and}\quad \mathcal D_{\geq\ell}(n)=\{\pi\in\Av_{n+\ell}(231):\tl(\pi)\geq\ell\}.\] Let $B_\ell(n)=|\VHC(\mathcal D_\ell(n))|$ and $B_{\geq\ell}(n)=|\VHC(\mathcal D_{\geq\ell}(n))|$. 

Suppose $\pi\in\mathcal D_\ell(n+1)$ is such that $\pi_{n+1-i}=n+1$ (where $n\geq 0$). Then $n+1-i$ is a tail-bound descent of $\pi$. 
According to Corollary \ref{Cor1}, $|\VHC(\pi)|$ is equal to the number of triples $(H,\mathcal H_U,\mathcal H_S)$, where $H\in\SW_{n+1-i}(\pi)$, $\mathcal H_U\in \VHC(\pi_U^H)$, and $\mathcal H_S\in \VHC(\pi_S^H)$. Choosing $H$ amounts to choosing the number $j\in\{1,\ldots,\ell\}$ such that the northeast endpoint of $H$ is $(n+1+j,n+1+j)$. The permutation $\pi$ and the choice of $H$ determine the permutations $\pi_U^H$ and $\pi_S^H$. On the other hand, the choices of $H$ and the permutations $\pi_U^H$ and $\pi_S^H$ uniquely determine $\pi$. It follows that $B_\ell(n+1)$, which is the number of ways to choose an element of $\VHC(\mathcal D_\ell(n+1))$, is also the number of ways to choose $j$, the permutations $\pi_U^H$ and $\pi_S^H$, and the valid hook configurations $\mathcal H_U$ and $\mathcal H_S$. Let us fix a choice of $j$. 

Because $\pi$ avoids $231$, $\pi_U^H$ must be a permutation of the set $\{1,\ldots,n-i\}\cup\{n+1\}\cup\linebreak\{n+2+j,\ldots,n+\ell+1\}$, while $\pi_S^H$ must be a permutation of $\{n-i+1,\ldots,n+j\}\setminus\{n+1\}$. Therefore, choosing $\pi_U^H$ and $\pi_S^H$ is equivalent to choosing their normalizations. The normalization of $\pi_U^H$ is in $\mathcal D_{\geq \ell-j+1}(n-i)$, while the normalization of $\pi_S^H$ is in $\mathcal D_{\geq j-1}(i)$. Any element of $\mathcal D_{\geq \ell-j+1}(n-i)$ can be chosen as the normalization of $\pi_U^H$, and any element of $\mathcal D_{\geq j-1}(i)$ can be chosen as the normalization of $\pi_S^H$. Also, every permutation has the same number of valid hook configurations as its normalization. Combining these facts, we find that the number of choices for $\pi_U^H$ and $\mathcal H_U$ is $|\VHC(\mathcal D_{\geq \ell-j+1}(n-i))|=B_{\geq \ell-j+1}(n-i)$. Similarly, the number of choices for $\pi_S^H$ and $\mathcal H_S$ is $B_{\geq j-1}(i)$. Consequently, 
\begin{equation}\label{Eq3}
B_\ell(n+1)=\sum_{i=1}^n\sum_{j=1}^\ell B_{\geq \ell-j+1}(n-i)B_{\geq j-1}(i).
\end{equation}

Let \[G_\ell(x)=\sum_{n\geq 0}B_{\geq\ell}(n)x^n\quad\text{and}\quad I(x,z)=\sum_{\ell\geq 0}G_\ell(x)z^\ell.\] Note that \[G_\ell(0)=B_{\geq \ell}(0)=|\VHC(\mathcal D_{\geq \ell}(0))|=|\VHC(123\cdots\ell)|=1.\] 
Because $B_{\geq 0}(n)=|\VHC(\Av_n(231))|$, we wish to understand the generating function \[I(x,0)=G_0(x)=\sum_{n\geq 0}B_{\geq 0}(n)x^n=\sum_{n\geq 0}|\VHC(\Av_n(231))|x^n.\] 

By \eqref{Eq3}, we have \[\sum_{\ell\geq 0}\sum_{n\geq 0}B_\ell(n+1)x^nz^\ell=\sum_{\ell\geq 0}\sum_{j=1}^\ell \sum_{n\geq 0}\sum_{i=1}^n B_{\geq \ell-j+1}(n-i)B_{\geq j-1}(i)x^nz^\ell\] \[=\sum_{\ell\geq 0}\sum_{j=1}^\ell G_{\ell-j+1}(x)(G_{j-1}(x)-G_{j-1}(0))z^\ell=\sum_{\ell\geq 0}\sum_{j=1}^\ell G_{\ell-j+1}(x)(G_{j-1}(x)-1)z^\ell\] 
\begin{equation}\label{Eq4}
=\left(\sum_{r\geq 0}G_{r+1}(x)z^r\right)\left(\sum_{j\geq 1}(G_{j-1}(x)-1)z^j\right)=(I(x,z)-I(x,0))(I(x,z)-1/(1-z)).
\end{equation}

On the other hand, \[B_\ell(n+1)=B_{\geq\ell}(n+1)-B_{\geq \ell+1}(n),\] so \[\sum_{\ell\geq 0}\sum_{n\geq 0}B_\ell(n+1)x^nz^\ell=\sum_{\ell\geq 0}\sum_{n\geq 0}B_{\geq\ell}(n+1)x^nz^\ell-\sum_{\ell\geq 0}\sum_{n\geq 0}B_{\geq \ell+1}(n)x^nz^\ell\]
\begin{equation}\label{Eq5}
=\frac{1}{x}\sum_{\ell\geq 0}(G_\ell(x)-1)z^\ell-\frac{1}{z}\sum_{\ell\geq 0}G_{\ell+1}(x)z^{\ell+1}=\frac{I(x,z)-1/(1-z)}{x}-\frac{I(x,z)-I(x,0)}{z}.
\end{equation} 
By \eqref{Eq4} and \eqref{Eq5}, we have 
\begin{equation}\label{Eq6}
xz(I(x,z)-I(x,0))(I(x,z)(1-z)-1)-z(I(x,z)(1-z)-1)+x(1-z)(I(x,z)-I(x,0))=0.
\end{equation}

At this point, we use the ``quadratic method," which is discussed and substantially generalized in \cite{Bousquet3}. We can rewrite \eqref{Eq6} as 
\begin{equation}\label{Eq2}
R(I(x,z),I(x,0),x,z)^2=\Delta(I(x,0),x,z),
\end{equation} where
\[R(u,v,x,z)=2xz(1-z)u+x-z-2xz+z^2-xz(1-z)v\] and \[\Delta(v,x,z)=(x - (2 + v) x z + (-1 + z) z + v x z^2)^2-4xz(1-z)(z + v x (2 z-1)).\] There is a unique power series $Z=Z(x)$ such that $Z(x)=x+O(x^2)$ and $R(I(x,Z),I(x,0),x,Z)$ $=0$. According to \eqref{Eq2}, $z=Z(x)$ is a repeated root of $\Delta(I(x,0),x,z)$. This means that the discriminant of $\Delta(I(x,0),x,z)$ with respect to $z$ vanishes. Computing this discriminant, we find that $Q(I(x,0),x)=0$, where 
\[Q(v,x)=(-1 + 6 x + 15 x^2 + 
   8 x^3) + (1 - 11 x + 28 x^3 + 16 x^4) v + (4 x - 19 x^2 - 
    14 x^3) v^2 \] 
\begin{equation}\label{Eq8}
+ (6 x^2 - 9 x^3 + 8 x^4) v^3 + 4 x^3 v^4 + x^4 v^5.
\end{equation}     
This polynomial is irreducible, so the generating function $I(x,0)=\sum_{n\geq 0}|\VHC(\Av_n(231))|x^n=\sum_{n\geq 0}|\VHC(\Av_n(132))|x^n$ is algebraic of degree $5$ over the field $\mathbb R(x)$. Fang \cite{Fang2} found an algebraic equation (written a bit differently) that the generating function $1+\sum_{n\geq 1}|\Int(\mathcal M_{n-1}^T)|x^n$ satisfies, and one can show (after some straightforward manipulations that we omit) that it matches the equation we have just found for $I(x,0)$. 

We are left to prove the desired asymptotic formula for the coefficients of $I(x,0)$. To do this, we invoke the techniques of singularity analysis outlined in Chapters VI and VII of \cite{Flajolet}. We refer the reader to that book for the relevant definitions and details. The singularities of $I(x,0)$ are contained in the set of roots of $x^{19} (1 + x)^2 (2048 x^3 + 2112 x^2+ 645 x -256)^3$, which is the discriminant of $Q(v,x)$ with respect to $v$. Pringsheim's theorem guarantees that the radius of convergence of $I(x,0)$ is one of these singularities. It follows that the radius of convergence must be $1/\rho$, where $\rho$ is as in the statement of the theorem. It is now routine to identify the branch of $Q(v,x)$ near $v=1/\rho$ that corresponds to the combinatorially-defined generating function $I(x,0)$ and expand it as a Puisseux series. We find that \[I(x,0)=\alpha_1+\alpha_2(1/\rho-x)+\alpha_3(1/\rho-x)^{3/2}+o((1/\rho-x)^{3/2})\] for some algebraic numbers $\alpha_1,\alpha_2,\alpha_3$. Moreover, $\alpha_3=4\beta\rho^{3/2}/3$, where $\beta$ is as in the statement of the theorem. This translates to the asymptotic formula \[|\VHC(\Av_n(231))|\sim\alpha_3\rho^{n-3/2}\frac{\Gamma(n-3/2)}{\Gamma(-3/2)\Gamma(n+1)}\sim \frac{\beta}{\sqrt{\pi}}\cdot\frac{\rho^n}{n^{5/2}}. \qedhere\]
\end{proof}

\begin{remark}\label{Rem2}
We have shown that $|\VHC(\Av_n(132))|=|\Int(\mathcal M_{n-1}^T)|$ for each $n\geq 1$. In the original preprint version of this article, we asked for a direct combinatorial proof of this fact. Such a proof was found soon afterward by Maya Sankar \cite{Maya}. 
\end{remark}

\section{$\VHC(\Av(132,231))$, $\VHC(\Av(132,312))$, and $\VHC(\Av(231,312))$}\label{Sec:Pairs}
 
This brief section is dedicated to proving the following theorem. Recall that $\mathcal M_n^A$ is the antichain on the set ${\bf M}_n$ and that $M_n=|{\bf M}_n|=|\Int(\mathcal M_n^A)|$ is the $n^\text{th}$ Motzkin number. 

\begin{theorem}\label{Thm3}
For every positive integer $n$, the bijection $\widehat\DL_n:\VHC(\Av_n(312))\to\Int(\mathcal M_{n-1}^C)$ restricts to a bijection $\VHC(\Av_n(231,312))\to\Int(\mathcal M_{n-1}^A)$. Furthermore, \[|\VHC(\Av_n(132,231))|=|\VHC(\Av_n(132,312))|=|\VHC(\Av_n(231,312))|=M_{n-1}.\]
\end{theorem}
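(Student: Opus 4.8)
The plan is to prove the theorem in two stages: first establish the bijection $\VHC(\Av_n(231,312))\to\Int(\mathcal M_{n-1}^A)$ as a restriction of $\widehat\DL_n$, then extract the enumerative equalities. For the bijective part, I would characterize which intervals $(\Lambda,\Lambda')\in\Int(\mathcal M_{n-1}^C)$ lie in the image of $\VHC(\Av_n(231,312))$ under $\widehat\DL_n$. Since $\mathcal M_{n-1}^A$ is the antichain on ${\bf M}_{n-1}$, its intervals are exactly the pairs $(\Lambda,\Lambda)$ with $\Lambda=\Lambda'$. So the heart of the argument is to show that for $\pi\in\Av_n(312)$ with $\VHC(\pi)\neq\emptyset$, the permutation $\pi$ additionally avoids $231$ if and only if every $\mathcal H\in\VHC(\pi)$ satisfies $\widehat\DL_n(\mathcal H)=(\Lambda,\Lambda)$ for a single Motzkin path $\Lambda$. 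In the construction of $\widehat\DL$, we have $\Lambda=X_1D^{\gamma_1}\cdots X_\ell D^{\gamma_\ell}$ and $\Lambda'=X_1D^{\gamma_1'}\cdots X_\ell D^{\gamma_\ell'}$, where $\gamma_i$ is the column sum and $\gamma_i'$ the row sum of $M(\pi)$. The condition $\Lambda=\Lambda'$ amounts to $\gamma_i=\gamma_{\ell-i+1}'$ for all $i$; I expect that avoiding $231$ forces $M(\pi)$ to be supported only on its anti-diagonal (i.e.\ $m_{ij}=0$ unless $j=\ell-i+1$), which makes the column sums equal the reversed row sums entrywise and hence $\Lambda=\Lambda'$.

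The key geometric step is translating the $231$-avoidance of $\pi$ into a statement about $M(\pi)$. Recall $m_{ij}$ counts points strictly between consecutive left-to-right maxima vertically and horizontally. A nonzero off-anti-diagonal entry $m_{ij}$ with $j>\ell-i+1$ corresponds to a point lying strictly below-and-right of a configuration that, together with two left-to-right maxima, produces a $231$ pattern; conversely avoiding $231$ should preclude exactly these entries. I would verify this by a direct pattern analysis using the reconstruction recipe of Remark~\ref{Rem1}: points between two left-to-right maxima are decreasing, and placing such a point too far to the right relative to its horizontal band creates a $231$. Once $M(\pi)$ is anti-diagonal, the sum-indecomposable/component structure is forced, $\VHC(\pi)$ is a singleton, and $\widehat\DL_n$ sends it to a diagonal interval $(\Lambda,\Lambda)$; tracking this through the surjectivity argument of Theorem~\ref{Thm2} (feeding a single Motzkin path $\Lambda=\Lambda'$ into the Fact with $a_i=b_i$) shows every diagonal interval is hit exactly once. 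This yields $|\VHC(\Av_n(231,312))|=|\Int(\mathcal M_{n-1}^A)|=M_{n-1}$.

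For the remaining two equalities I would avoid reproving everything and instead use symmetry together with the decomposition machinery. The bijection $\VHC(\Av(132))\to\VHC(\Av(231))$ of \cite{DefantFertilityWilf} preserves permutation length; the natural question is whether it restricts appropriately on the smaller classes. More robustly, I would set up a recurrence for each of $|\VHC(\Av_n(132,231))|$ and $|\VHC(\Av_n(132,312))|$ via Corollary~\ref{Cor1}, exploiting that a tail-bound descent decomposes a valid hook configuration into $H$-sheltered and $H$-unsheltered pieces whose normalizations again avoid the same two patterns. The point is that the direct-sum/tail structure of these doubly-avoiding classes is rigid enough that the resulting recurrence coincides with the standard Motzkin recurrence $M_{n-1}=M_{n-2}+\sum_{k}M_{k}M_{n-3-k}$ (or its generating-function form $M(x)=1+xM(x)+x^2M(x)^2$). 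Matching initial conditions then gives $M_{n-1}$ for all three classes.

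The main obstacle I anticipate is the geometric pattern-translation step: proving cleanly that $231$-avoidance of $\pi$ is equivalent to $M(\pi)$ being anti-diagonal, and handling the interaction with the requirement $\VHC(\pi)\neq\emptyset$ (which forces $\pi_n=n$ and constrains descent tops to have valid northeast endpoints). The subtlety is that $M(\pi)$ already satisfies the lower-$2\times2$ condition from $312$-avoidance (Remark~\ref{Rem1}), so I must isolate exactly the extra vanishing imposed by $231$ without double-counting the constraints. Once that equivalence is nailed down, the rest is bookkeeping: the antichain intervals are trivially the diagonal, and the two generating-function/recurrence verifications for the $132$-containing pairs are routine applications of Corollary~\ref{Cor1}.
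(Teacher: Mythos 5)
Your treatment of the bijective half is essentially the paper's: the paper observes that $\Av_n(231,312)$ is exactly the set of layered permutations and that, for $\pi\in\Av_n(312)$ and $\mathcal H\in\VHC(\pi)$ with $\widehat\DL_n(\mathcal H)=(\Lambda,\Lambda')$, one has $\Lambda=\Lambda'$ if and only if $\pi$ is layered; your condition that $M(\pi)$ be supported on its anti-diagonal is precisely the matrix-level translation of layeredness, and your pattern analysis (a nonzero $m_{ij}$ with $j\geq\ell-i+2$ produces a $231$ via the point counted there together with $\mathfrak R_i$ and $\mathfrak R_{i-1}$) is the right way to make the paper's ``straightforward to check'' precise. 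One claim in your write-up is false, though harmlessly so: $\VHC(\pi)$ is \emph{not} a singleton for layered $\pi$ (e.g.\ the layered permutation $2134$ has two valid hook configurations, sent to $(UED,UED)$ and $(EUD,EUD)$). You do not need that claim: whether $\Lambda=\Lambda'$ depends only on $M(\pi)$ and not on the choice of $\mathcal H$, so every element of $\VHC(\pi)$ lands on a diagonal interval exactly when $\pi$ is layered, and the fact that each diagonal interval is hit exactly once follows from the injectivity and surjectivity already established in Theorem \ref{Thm2}.

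Where you genuinely diverge is in the last two equalities. The paper disposes of them in one line by citing length-preserving bijections $\VHC(\Av(231,312))\to\VHC(\Av(132,312))$ and $\VHC(\Av(132,231))\to\VHC(\Av(231,312))$ from \cite{DefantFertilityWilf}. Your fallback plan --- derive recurrences for $|\VHC(\Av_n(132,231))|$ and $|\VHC(\Av_n(132,312))|$ via Corollary \ref{Cor1} and match them to the Motzkin recurrence --- is plausible but currently only an assertion: you have not verified that the tail-bound-descent decomposition closes up within these doubly restricted classes, i.e.\ that the normalized sheltered and unsheltered pieces range freely over the analogous classes and that the resulting recurrence really collapses to $M(x)=1+xM(x)+x^2M(x)^2$ with the right initial conditions. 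That verification is where the remaining work lies if you decline to invoke \cite{DefantFertilityWilf}; note that even the paper, when it later needs the tail-length refinement of $\VHC(\Av(132,231))$ in Lemma \ref{Lem1}, routes through the bijections and the Motzkin-path picture rather than through a direct recurrence.
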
 
 
\begin{proof}
Recall the definition of the components of a permutation from Section \ref{Sec:312}. A permutation is called \emph{layered} if its components are decreasing permutations. For example, the permutation $2143765=(21)\oplus(21)\oplus(321)$ is layered because its components are $21$, $21$, and $321$. It is well known that the set of layered permutations in $S_n$ is precisely $\Av_n(231,312)$. If $\pi\in\Av_n(312)$, $\mathcal H\in\VHC(\pi)$, and $\widehat\DL_n(\mathcal H)=(\Lambda,\Lambda')$, then it is straightforward to check that $\Lambda=\Lambda'$ if and only if $\pi$ is layered. In other words, $\DL_n(\mathcal H)\in\Int(\mathcal M_{n-1}^A)$ if and only if $\mathcal H\in\VHC(\Av_n(231,312))$. In \cite{DefantFertilityWilf}, the author found bijections $\VHC(\Av(231,312))\to\VHC(\Av(132,312))$ and $\VHC(\Av(132,231))\to\VHC(\Av(231,312))$ that preserve the lengths of the underlying permutations. This proves the last statement of the theorem. 
\end{proof}
 
\section{$\VHC(\Av(132,321))$}\label{Sec:132,321}

\begin{theorem}\label{Thm4}
We have \[\sum_{n\geq 0}|\VHC(\Av_n(132,321))|x^n=\frac{1-3x+3x^2}{(1-x)^4}.\] 
\end{theorem}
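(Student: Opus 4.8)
The plan is to understand the structure of permutations in $\Av_n(132,321)$ explicitly, use this to describe their valid hook configurations combinatorially, and then extract the generating function. First I would recall that $\Av(132,321)$ is a very restrictive class: avoiding $321$ means the permutation is a union of two increasing subsequences, and avoiding $132$ on top of this forces a rigid structure. Concretely, a permutation avoiding both $132$ and $321$ consists of an increasing sequence with at most one ``dip,'' so the descents are highly constrained; in fact such a permutation has at most one descent, or more precisely its non-fixed structure is controlled by a single displaced block. I would make this structure theorem precise first, since everything downstream depends on having an exact parametrization of the permutations in $\Av_n(132,321)$ (the number of which, and their descent patterns, we need).

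Next I would count valid hook configurations on top of this structure. Because each permutation in the class has very few descents (hence very few hooks to place), I expect $|\VHC(\pi)|$ to be computable directly from Definition \ref{Def9} for each $\pi$, possibly with the aid of Corollary \ref{Cor1} if a tail-bound descent is available. The key reduction is that with at most one or two descents, the constraints (no point directly above a hook, no improper hook crossings) leave only a small, explicitly enumerable set of choices for the northeast endpoints. I would organize the count by the position and size of the descending block, summing $|\VHC(\pi)|$ over all $\pi\in\Av_n(132,321)$ to get $|\VHC(\Av_n(132,321))|$ as a closed-form sum in $n$.

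With the per-$n$ count in hand, the final step is to form the ordinary generating function and verify it equals $\dfrac{1-3x+3x^2}{(1-x)^4}$. The right-hand side expands with coefficients that are a cubic quasi-polynomial in $n$ (since the denominator is $(1-x)^4$), so I would confirm that the combinatorial count I obtained is a degree-$3$ polynomial in $n$ and match the low-order coefficients, or equivalently recognize the resulting sum as a known convolution of binomial-type terms. This is essentially the identification with OEIS A050407 mentioned in the introduction.

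The main obstacle I anticipate is the valid-hook-configuration count rather than the generating-function bookkeeping: even though the permutations in $\Av_n(132,321)$ are structurally simple, correctly enumerating the legal hook placements requires careful attention to Conditions 2 and 3 of Definition \ref{Def9}, especially ruling out the forbidden configurations of Figure \ref{Fig9} when the descent top must send its hook past intervening points. Once the exact formula $|\VHC(\pi)|$ as a function of the descent position is nailed down, the remaining summation and generating-function verification should be routine.
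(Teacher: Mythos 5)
Your plan is essentially the paper's proof: the paper makes your structure theorem explicit by parametrizing the permutations in $\Av_n(132,321)$ with tail length $\ell$ as $\zeta_i=(i+1)\cdots(n-\ell)\,1\cdots i\,(n-\ell+1)\cdots n$ for $1\leq i\leq n-\ell-1$ (each has exactly one descent and exactly $\ell$ valid hook configurations, each with one hook), giving $|\VHC(\Av_n(132,321))|=1+\sum_{\ell=1}^{n-1}(n-\ell-1)\ell$ and then the generating function routinely. Your structural claim that such permutations have at most one descent is correct, and the hook-placement count you anticipate as the main obstacle is in fact easy here since the only candidate northeast endpoints are the $\ell$ tail points and none of them violates Condition 2.
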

\begin{proof}
Choose $n\geq 1$ and $\ell\in\{1,\ldots,n-1\}$. By concatenating the increasing permutations \linebreak $(i+1)\cdots(n-\ell)$, $1\cdots i$, and $(n-\ell+1)\cdots n$, we obtain the permutation $\zeta_i=(i+1)\cdots$\linebreak$(n-\ell)1\cdots i(n-\ell+1)\cdots n$. One can easily check that $\zeta_1,\ldots,\zeta_{n-\ell-1}$ are precisely the permutations in $\Av_n(132,321)$ with tail length $\ell$ and that each of these permutations has exactly $\ell$ valid hook configurations (each of which has exactly $1$ hook). It follows that \[|\VHC_n(132,321)|=1+\sum_{\ell=1}^{n-1}(n-\ell-1)\ell.\] The remainder of the proof is routine.  
 \end{proof}
 
\section{$\VHC(231,321)$}\label{Sec:231,321}

\begin{theorem}\label{Thm5}
We have \[\sum_{n\geq 0}|\VHC(\Av_n(231,321))|x^n=\frac{1-2x+2x^2-\sqrt{1-4x+4x^2-4x^3+4x^4}}{2x^2}.\] 
\end{theorem}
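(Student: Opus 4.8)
The plan is to adapt the recursive strategy from the proof of Theorem \ref{Thm1}, exploiting the fact that $\{231,321\}$-avoidance is far more rigid than $231$-avoidance alone; this rigidity collapses the two-variable quadratic-method computation of that proof into a single application of the kernel method. For $\ell\geq 0$ and $n\geq 0$, set $\mathcal D_\ell(n)=\{\pi\in\Av_{n+\ell}(231,321):\tl(\pi)=\ell\}$ and $\mathcal D_{\geq\ell}(n)=\{\pi\in\Av_{n+\ell}(231,321):\tl(\pi)\geq\ell\}$, and let $B_\ell(n)=|\VHC(\mathcal D_\ell(n))|$ and $B_{\geq\ell}(n)=|\VHC(\mathcal D_{\geq\ell}(n))|$. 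Since $B_{\geq 0}(n)=|\VHC(\Av_n(231,321))|$, the goal is to compute $\sum_{n\geq 0}B_{\geq 0}(n)x^n$. As in Section \ref{Sec:312}, a permutation with a nonempty set of valid hook configurations must end in its largest entry.

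First I would record the structural fact that drives the argument. Fix $\pi\in\mathcal D_\ell(n+1)$ with $\ell\geq 1$, and let $p$ be the position of the largest non-tail entry $n+1$ (so $p\leq n$). Because $\pi$ avoids $231$, every non-tail entry to the right of position $p$ exceeds every entry to the left of $p$; because $\pi$ avoids $321$ and $\pi_p=n+1$ is larger than all of them, these entries must increase. Hence $\pi=L\,(n+1)\,(p)(p+1)\cdots(n)\,(n+2)(n+3)\cdots(n+1+\ell)$, where $L\in\Av_{n-i}(231,321)$ and $i=n+1-p\in\{1,\dots,n\}$; conversely every choice of $i$ and $L$ arises this way. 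The entry $n+1$ is a tail-bound descent top, so Corollary \ref{Cor1} expresses $|\VHC(\pi)|$ as a sum over the hooks $H$ with southwest endpoint $(p,n+1)$, one for each tail entry $(n+1+j,n+1+j)$ with $j\in\{1,\dots,\ell\}$. The crucial point is that the $H$-sheltered subpermutation $\pi_S^H$ consists of the increasing block $(p)(p+1)\cdots(n)$ followed by $(n+2)\cdots(n+j)$, so it is increasing and therefore has exactly one valid hook configuration. This is exactly where $\{231,321\}$ improves on $231$: in the proof of Theorem \ref{Thm1} the sheltered part was an arbitrary $231$-avoider contributing a factor $B_{\geq j-1}(i)$, whereas here it contributes only $1$.

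Carrying out the same bookkeeping as in Theorem \ref{Thm1} (the normalization of $\pi_U^H$ ranges over $\mathcal D_{\geq\ell-j+1}(n-i)$ as $L$ ranges over $\Av_{n-i}(231,321)$), I would obtain the recurrence
\[B_\ell(n+1)=\sum_{i=1}^{n}\sum_{j=1}^{\ell}B_{\geq\ell-j+1}(n-i)\qquad(\ell\geq 1),\]
together with the initial data $B_\ell(0)=1$ and $B_0(n)=0$ for $n\geq 1$. Writing $G_\ell(x)=\sum_{n\geq 0}B_{\geq\ell}(n)x^n$ and using the identity $B_\ell(n+1)=B_{\geq\ell}(n+1)-B_{\geq\ell+1}(n)$, this recurrence becomes $G_\ell-xG_{\ell+1}=1+\frac{x^2}{1-x}\sum_{k=1}^{\ell}G_k$ for every $\ell\geq 0$. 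Introducing $I(x,z)=\sum_{\ell\geq 0}G_\ell(x)z^\ell$ and $F(x)=I(x,0)=\sum_{n\geq 0}|\VHC(\Av_n(231,321))|x^n$, and summing over $\ell$ against $z^\ell$, I would arrive at the functional equation
\[I-\frac{x}{z}(I-F)=\frac{1}{1-z}+\frac{x^2}{(1-x)(1-z)}(I-F).\]
Because $I$ occurs linearly, this is governed by the kernel method rather than the quadratic method.

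Clearing denominators puts the equation in the form $\mathcal K(x,z)\,(I-F)+z(1-x)\big(F(1-z)-1\big)=0$ with kernel $\mathcal K(x,z)=(z-x)(1-x)(1-z)-zx^2=-(1-x)z^2+(1-2x^2)z-x(1-x)$. The unique root $Z=Z(x)$ with $Z(0)=0$ satisfies $(1-x)Z^2-(1-2x^2)Z+x(1-x)=0$, whose discriminant is exactly $1-4x+4x^2-4x^3+4x^4$; substituting $z=Z$ kills the $(I-F)$ term and forces $F=1/(1-Z)$. A short simplification (rationalizing and using $(1-2x+2x^2)^2-(1-4x+4x^2-4x^3+4x^4)=4x^2(1-x)$) then yields the stated closed form. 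The main obstacle is the first half: proving the structural decomposition of $\pi$, and in particular that $\pi_S^H$ is always increasing and that $L$ ranges freely over $\Av_{n-i}(231,321)$, together with the $B_\ell$-versus-$B_{\geq\ell}$ bookkeeping needed to justify the single-sum recurrence. Once the recurrence is established, the kernel-method computation is routine, and it is the payoff for the extra rigidity of $\{231,321\}$-avoidance.
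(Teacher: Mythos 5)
Your proposal is correct and follows essentially the same route as the paper: the same tail-bound-descent decomposition via Corollary \ref{Cor1} yielding the recurrence $B_\ell(n+1)=\sum_{i=1}^{n}\sum_{j=1}^{\ell}B_{\geq\ell-j+1}(n-i)$ (with the sheltered part forced to be increasing by $321$-avoidance), the same catalytic-variable functional equation, and the same kernel root $Z(x)$ with $I(x,0)=1/(1-Z)$. The only differences are presentational — your more explicit structural description of $\pi$ and your intermediate one-variable recurrence for $G_\ell$ — so there is nothing substantive to change.
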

\begin{proof}
Let \[\mathcal D_\ell(n)=\{\pi\in\Av_{n+\ell}(231,321):\tl(\pi)=\ell\}\quad\text{and}\quad \mathcal D_{\geq\ell}(n)=\{\pi\in\Av_{n+\ell}(231,321):\tl(\pi)\geq\ell\}.\] Let $B_\ell(n)=|\VHC(\mathcal D_\ell(n))|$ and $B_{\geq\ell}(n)=|\VHC(\mathcal D_{\geq\ell}(n))|$. 

Suppose $\pi\in\mathcal D_\ell(n+1)$ is such that $\pi_{n+1-i}=n+1$ (where $n\geq 0$). Then $n+1-i$ is a tail-bound descent of $\pi$. 
Corollary \ref{Cor1} tells us that $|\VHC(\pi)|$ is equal to the number of triples $(H,\mathcal H_U,\mathcal H_S)$, where $H\in\SW_{n+1-i}(\pi)$, $\mathcal H_U\in \VHC(\pi_U^H)$, and $\mathcal H_S\in \VHC(\pi_S^H)$. Choosing $H$ amounts to choosing the number $j\in\{1,\ldots,\ell\}$ such that the northeast endpoint of $H$ is $(n+1+j,n+1+j)$. As in the proof of Theorem \ref{Thm1}, we find that $B_\ell(n+1)$ is the number of ways to choose $j$, the permutations $\pi_U^H$ and $\pi_S^H$, and the valid hook configurations $\mathcal H_U$ and $\mathcal H_S$. Fix a choice of $j$. 

Because $\pi$ avoids $231$, $\pi_U^H$ must be a permutation of the set $\{1,\ldots,n-i\}\cup\{n+1\}\cup\linebreak\{n+2+j,\ldots,n+\ell+1\}$, while $\pi_S^H$ must be a permutation of $\{n-i+1,\ldots,n+j\}\setminus\{n+1\}$. Because $\pi$ avoids $321$, $\pi_S^H$ is the increasing permutation on the set $\{n-i+1,\ldots,n+j\}\setminus\{n+1\}$. There is one choice for $\pi_S^H$ and $\mathcal H_S$. Choosing $\pi_U^H$ is equivalent to choosing its normalization, which is in $\mathcal D_{\geq \ell-j+1}(n-i)$. Any element of $\mathcal D_{\geq \ell-j+1}(n-i)$ can be chosen as the normalization of $\pi_U^H$. Furthermore, every permutation has the same number of valid hook configurations as its normalization. Combining these facts, we find that the number of choices for $\pi_U^H$ and $\mathcal H_U$ is $|\VHC(\mathcal D_{\geq \ell-j+1}(n-i))|=B_{\geq \ell-j+1}(n-i)$. Thus, 
\begin{equation}\label{Eq9}
B_\ell(n+1)=\sum_{i=1}^n\sum_{j=1}^\ell B_{\geq \ell-j+1}(n-i).
\end{equation}

Let \[G_\ell(x)=\sum_{n\geq 0}B_{\geq\ell}(n)x^n\quad\text{and}\quad I(x,z)=\sum_{\ell\geq 0}G_\ell(x)z^\ell.\] Note that $G_\ell(0)=B_{\geq \ell}(0)=|\VHC(\mathcal D_{\geq \ell}(0))|=|\VHC(123\cdots\ell)|=1$. We wish to understand the generating function \[I(x,0)=G_0(x)=\sum_{n\geq 0}B_{\geq 0}(n)x^n=\sum_{n\geq 0}|\VHC(\Av_n(231,321))|x^n.\]

By \eqref{Eq9}, we have \[\sum_{\ell\geq 0}\sum_{n\geq 0}B_{\geq\ell}(n+1)x^nz^\ell=\sum_{\ell\geq 0}\sum_{n\geq 0}\sum_{i=1}^n\sum_{j=1}^\ell B_{\geq \ell-j+1}(n-i)x^nz^\ell=\sum_{\ell\geq 0}\sum_{j=1}^\ell \frac{x}{1-x}G_{\ell-j+1}(x)z^\ell\] 
\begin{equation}\label{Eq10}
=\frac{x(I(x,z)-I(x,0))}{(1-x)(1-z)}.
\end{equation} 
The same argument used to deduce \eqref{Eq5} in the proof of Theorem \ref{Thm1} shows that \begin{equation}\label{Eq11}
\sum_{\ell\geq 0}\sum_{n\geq 0}B_{\geq\ell}(n+1)x^nz^\ell=\frac{I(x,z)-1/(1-z)}{x}-\frac{I(x,z)-I(x,0)}{z}.
\end{equation}  
We can combine \eqref{Eq10} and \eqref{Eq11} and rearrange terms to obtain the equation 
\begin{equation}\label{Eq12}
I(x,z)((1-x)z^2+(-1+2x^2)z+x-x^2)=(-1 + x) z + I(x,0)(x-x^2+(-x+2x^2)z).
\end{equation} 

We now use the \emph{kernel method} (see \cite{Bousquet4,Bousquet3,Banderier,
Prodinger} for details about this method). Let $Z(x)=\dfrac{1-2x^2-\sqrt{1 - 4 x + 4 x^2 - 4 x^3 + 4 x^4}}{2(1-x)}$ so that $(1-x)Z(x)^2+(-1+2x^2)Z(x)+x-x^2=0$. We can substitute $z=Z(x)$ in \eqref{Eq12} to find that $(-1+x)Z(x)+I(x,0)(x-x^2+(-x+2x^2)Z(x))=0$. Thus, \[I(x,0)=\frac{(1-x)Z(x)}{x-x^2+(-x+2x^2)Z(x)}=\frac{(1-x)Z(x)}{-((1-x)Z(x)^2+(-1+2x^2)Z(x))+(-x+2x^2)Z(x)}\] \[=\frac{1}{1-Z(x)}=\frac{1-2x+2x^2-\sqrt{1-4x+4x^2-4x^3+4x^4}}{2x^2}. \qedhere\]
\end{proof} 

\section{$\VHC(\Av(312,321))$}\label{Sec:312,321}

In the proof of the following theorem, it will be helpful to consider a new statistic defined on valid hook configurations. Suppose $\mathcal H$ is a valid hook configuration of a permutation $\pi$. Recall that a left-to-right maximum of the plot of $\pi$ is a point in the plot of $\pi$ that is higher than every point to its left. An \emph{active site} of $\mathcal H$ is a left-to-right maximum of the plot of $\pi$ that is not a northeast endpoint of a hook in $\mathcal H$. Define the \emph{activity} of $\mathcal H$, denoted $\act(\mathcal H)$, to be the number of active sites of $\mathcal H$. For example, the activity of the valid hook configuration \[\includegraphics[width=.14\linewidth]{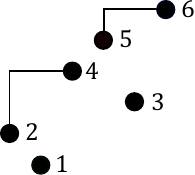}
\] is $2$ because the active sites are $(1,2)$ and $(4,5)$. 

\begin{theorem}\label{Thm6}
For each positive integer $n$, we have \[|\VHC(\Av_n(312,321))|=\sum_{k=0}^{\left\lfloor\frac{n-1}{2}\right\rfloor}\frac{1}{2k+1}{n-k-1\choose k}{n\choose 2k}.\] 
\end{theorem}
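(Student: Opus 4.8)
The plan is to mimic the structure of the proofs of Theorems \ref{Thm1} and \ref{Thm5}, using the tail decomposition from Corollary \ref{Cor1}, but to carry along the abundancy statistic $\ab$ so as to track the supply of available northeast endpoints. For $\pi\in\Av_n(312,321)$ with $\VHC(\pi)\neq\emptyset$ we again have $\pi_n=n$, and avoidance of $321$ forces the non-left-to-right-maxima to be confined in a very rigid way. First I would set up the refined generating function that records the number of hooks (equivalently the number of descents $k$) together with the abundancy, since the target sum is explicitly indexed by $k$; the binomial coefficients $\binom{n-k-1}{k}$ and $\binom{n}{2k}$ strongly suggest that the right bookkeeping is ``choose the $k$ descent tops / hook positions, then distribute the remaining increasing runs among $2k+1$ slots,'' with the factor $\frac{1}{2k+1}$ being a Catalan-type cyclic-lemma contribution.

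Next I would establish the combinatorial heart of the argument: a structural description of the valid hook configurations of $312$-and-$321$-avoiding permutations. Avoiding $312$ means (by Remark \ref{Rem1} and the discussion preceding Theorem \ref{Thm2}) that every northeast endpoint is a left-to-right maximum and the associated matrix $M(\pi)$ has the staircase-support property; adding $321$-avoidance collapses each block between consecutive left-to-right maxima to contain at most one lower point, so that the plot becomes a sequence of increasing runs punctuated by single ``descent'' drops. Concretely, I expect to show that such a $\pi$ is determined by the positions of its $k$ descent tops among the left-to-right maxima, and that a valid hook configuration corresponds to a non-crossing matching of the $k$ descent tops to $k$ of the left-to-right maxima lying weakly to their right, subject to the Motzkin-path inequality from \eqref{Eq1}. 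The abundancy is then exactly the number of unmatched left-to-right maxima. This should let me reduce the count to a lattice-path or ballot-type enumeration.

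The cleanest route to the closed form is then to recognize the resulting objects as Motzkin-like paths with marked steps, or to solve a functional equation via the kernel method as in \eqref{Eq12}. I would introduce $I(x,z)=\sum_{\ell\ge 0}G_\ell(x)z^\ell$ tracking tail length $\ell$, derive a recurrence of the shape $B_\ell(n+1)=\sum_{i,j}(\text{contribution})$ from Corollary \ref{Cor1}, convert it to a quadratic kernel equation in $I(x,z)$, and solve. Alternatively—and this is probably the slicker finish—I would verify that the right-hand side $\sum_{k}\frac{1}{2k+1}\binom{n-k-1}{k}\binom{n}{2k}$ satisfies the same recurrence and initial conditions, using the known OEIS identity (A101785) expressing this sum as the number of Motzkin paths of length $n-1$ with a prescribed number of up steps, or as noncrossing partitions into odd blocks; the bijection between the latter and the structural model of the previous paragraph then completes the proof.

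\textbf{Main obstacle.}
The hard part will be pinning down exactly which matchings of descent tops to left-to-right maxima yield \emph{valid} hook configurations once $321$-avoidance is imposed, and in particular justifying the factor $\frac{1}{2k+1}$. This factor signals a cyclic-shift or rotation symmetry (as in the classical cycle lemma for ballot sequences), and the delicate point is to produce a clean bijection—or a clean recurrence—that manifests this symmetry without getting lost in the interplay between Condition 2 (no point above a hook) and Condition 3 (non-crossing) of Definition \ref{Def9}. I expect the cleanest resolution is to reinterpret the whole configuration as a noncrossing partition of $[n]$ whose blocks have odd size, matching the stated OEIS interpretation of A101785, and then to verify that the hook/descent structure corresponds bijectively to the odd-block condition; establishing that correspondence rigorously is where most of the work will lie.
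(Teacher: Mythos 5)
Your proposal is a plan rather than a proof, and the plan as written has a genuine gap at its center: you never derive the recurrence on which everything else depends, and the recurrence you say you would derive is the wrong one for this pattern class. You propose to mimic Theorems \ref{Thm1} and \ref{Thm5} by decomposing at the position of the maximum entry and tracking tail length with a catalytic variable $z$. That decomposition is powered by $231$-avoidance (it forces $\pi_U^H$ and $\pi_S^H$ to occupy separated value sets); for $312$-avoiders with nonempty $\VHC$ the maximum is pinned at the last position, so the "choose $i$ with $\pi_{n+1-i}=n+1$" step is degenerate, and the $312$-analogue of the tail decomposition (equation \eqref{Eq7}) needs a second catalytic parameter (the number of components) and leads to a functional equation in two catalytic variables that the paper does not solve in closed form. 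You correctly identify abundancy as the relevant statistic and correctly observe that $321$-avoidance leaves at most one non-left-to-right-maximum between consecutive left-to-right maxima, but you then drop abundancy and revert to tail length when you actually write down $I(x,z)$. The paper does the opposite: it uses abundancy as the sole catalytic variable and decomposes by peeling from the \emph{left} — if the underlying permutation starts with $1$, delete that point (abundancy drops by one); otherwise the permutation begins $23\cdots(r+1)1\cdots$, and deleting the initial run together with the hook rooted at $(r,r+1)$ lands in $\mathcal E_{\geq a-r+1}(n-1)$. This yields the one-variable kernel equation that the tail-based setup would not.

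The second gap is the one you flag yourself: the factor $\frac{1}{2k+1}$. You propose to explain it by a cycle lemma or by constructing a bijection to noncrossing partitions with odd blocks, but you do not construct either, and that construction would be most of the work in your route. In the paper's route no such combinatorial argument is needed: the kernel method produces $J(x,0)=1/(1-Z(x))$ and then the algebraic identity $J(x,0)=1+\frac{xJ(x,0)}{1-x^2J(x,0)^2}$, from which the stated sum falls out of Lagrange inversion — the $\frac{1}{2k+1}$ is just the Lagrange denominator. So the two ingredients you would need to supply to complete your plan (a workable recurrence for $312$-avoiders and a proof of the $\frac{1}{2k+1}$ identity) are exactly the two places where your sketch currently points in a direction that either fails or remains unexecuted.
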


\begin{proof}
Let \[\mathcal E_a(n)=\{\mathcal H\in\VHC(\Av_{n+a}(312,321)):\act(\mathcal H)=a\}\] and \[\mathcal E_{\geq a}(n)=\{\mathcal H\in\VHC(\Av_{n+a}(312,321)):\act(\mathcal H)\geq a\}.\] Let $A_a(n)=|\mathcal E_a(n)|$ and $A_{\geq a}(n)=|\mathcal E_{\geq a}(n)|$. Let $\widetilde{\mathcal E}_a(n)$ be the set of valid hook configurations $\mathcal H\in\mathcal E_a(n)$ such that the first entry of the underlying permutation of $\mathcal H$ is not $1$. Removing the leftmost point (which is also the point with height $1$) from each valid hook configuration in $\mathcal E_a(n+1)\setminus \widetilde{\mathcal E}_a(n+1)$ yields a bijection from $\mathcal E_a(n+1)\setminus \widetilde{\mathcal E}_a(n+1)$ to $\mathcal E_{a-1}(n+1)$, so $A_a(n+1)=A_{a-1}(n+1)+|\widetilde{\mathcal E}_a(n+1)|$. 

Now suppose $\mathcal H\in\widetilde{\mathcal E}_a(n+1)$, and let $\pi=\pi_1\cdots\pi_{n+a+1}$ be the underlying permutation of $\mathcal H$. Let $r\geq 1$ be such that $\pi_{r+1}=1$. Because $\pi$ avoids $312$ and $321$, we must have $\pi_1\cdots\pi_r=23\cdots(r+1)$. The points $(i,i+1)$ for $i\in\{1,\ldots,r\}$ are all active sites of $\mathcal H$, so $r\leq a$. There must be a hook of $\mathcal H$ with southwest endpoint $(r,r+1)$. If we remove this hook along with all of the points $(i,\pi_i)$ for $i\in\{1,\ldots,r+1\}$ and then ``normalize" the remaining points and hooks, we obtain a valid hook configuration $\mathcal H'\in\mathcal E_{\geq a-r+1}(n-1)$. On the other hand, it is easy to recover $\mathcal H$ if we are just given $\mathcal H'$ and the values of $n,a,r$. We depict this in Figure \ref{Fig1}. It follows that \[|\widetilde{\mathcal E}_a(n+1)|=\sum_{r=1}^a|\mathcal E_{\geq a-r+1}(n-1)|=\sum_{r=1}^aA_{\geq a-r+1}(n-1).\] Consequently, 
\begin{equation}\label{Eq13}
A_a(n+1)=A_{a-1}(n+1)+\sum_{r=1}^aA_{\geq a-r+1}(n-1).
\end{equation}

\begin{figure}[h]
\begin{center}
\includegraphics[width=.54\linewidth]{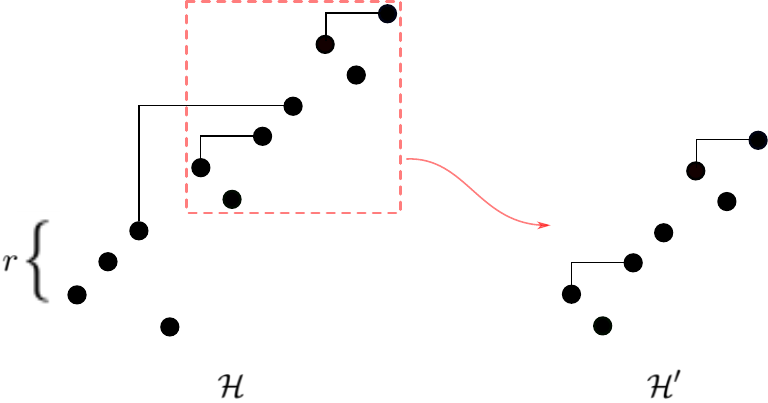}
\caption{An illustration of the proof of Theorem \ref{Thm6}. In this example, $n=6$, $a=4$, and $r=3$.}
\label{Fig1}
\end{center}  
\end{figure}
 
Now let $J(x,z)=\sum_{a\geq 0}\sum_{n\geq 0}A_{\geq a}(n)x^nz^a$. Note that we are primarily interested in the generating function \[J(x,0)=\sum_{n\geq 0}A_{\geq 0}(n)x^n=\sum_{n\geq 0}|\VHC(\Av_n(312,321))|x^n.\] We have \[\sum_{a\geq 0}\sum_{n\geq 0}A_a(n+1)x^nz^a=\sum_{a\geq 0}\sum_{n\geq 0}A_{\geq a}(n+1)x^nz^a-\sum_{a\geq 0}\sum_{n\geq 0}A_{\geq a+1}(n)x^nz^a\]
\begin{equation}\label{Eq14}
=\frac{J(x,z)-1/(1-z)}{x}-\frac{J(x,z)-J(x,0)}{z}
\end{equation}
and 
\[\sum_{a\geq 0}\sum_{n\geq 0}A_{a-1}(n+1)x^nz^a=\sum_{a\geq 1}\sum_{n\geq 0}A_{a-1}(n+1)x^nz^a=\sum_{a\geq 1}\sum_{n\geq 0}A_{\geq a-1}(n+1)x^nz^a-\sum_{a\geq 1}\sum_{n\geq 0}A_{\geq a}(n)x^nz^a\]
\begin{equation}\label{Eq15}
=\frac{z}{x}(J(x,z)-1/(1-z))-(J(x,z)-J(x,0)).
\end{equation}

Combining \eqref{Eq13} and \eqref{Eq15} gives \[\sum_{a\geq 0}\sum_{n\geq 0}A_a(n+1)x^nz^a=\sum_{a\geq 0}\sum_{n\geq 0}A_{a-1}(n+1)x^nz^a+\sum_{a\geq 0}\sum_{n\geq 0}\sum_{r=1}^aA_{\geq a-r+1}(n-1)x^nz^a\] \[=\frac{z}{x}(J(x,z)-1/(1-z))-(J(x,z)-J(x,0))+x\sum_{a\geq 0}\sum_{r=1}^a\sum_{n\geq 0}A_{\geq a-r+1}(n)x^nz^a\] \[=\frac{z}{x}(J(x,z)-1/(1-z))-(J(x,z)-J(x,0))+\frac{x}{1-z}(J(x,z)-J(x,0))\] \[=J(x,z)\left(\frac{z}{x}+\frac{x}{1-z}-1\right)-\frac{z}{x(1-z)}+J(x,0)\left(1-\frac{x}{1-z}\right).\] We now combine this with \eqref{Eq14} to obtain the equation \[\frac{J(x,z)-1/(1-z)}{x}-\frac{J(x,z)-J(x,0)}{z}
=J(x,z)\!\left(\frac{z}{x}+\frac{x}{1-z}-1\right)-\frac{z}{x(1-z)}+J(x,0)\!\left(1-\frac{x}{1-z}\right).\] Rearranging this equation yields 
\begin{equation}\label{Eq16}
\frac{1}{z(1-z)^2}(J(x,z)-J(x,0))\left(x(1-z)^2+x^2z-z(1-z)^2\right)=J(x,0)-\frac{1}{1-z}.
\end{equation} 

We now employ the kernel method. There is a unique power series $Z=Z(x)$ such that 
\begin{equation}\label{Eq17}
x(1-Z)^2+x^2Z-Z(1-Z)^2=0\quad\text{and}\quad\dfrac{1}{1-Z(x)}=1+x+x^2+O(x^3).
\end{equation} If we substitute $z=Z(x)$ in \eqref{Eq16}, we find that $J(x,0)=\dfrac{1}{1-Z(x)}$. Now, \[J(x,0)-1-\frac{xJ(x,0)}{1-x^2J(x,0)^2}=\frac{1}{1-Z}-1-\frac{x\frac{1}{1-Z}}{1-x^2\frac{1}{(1-Z)^2}}=\frac{Z}{1-Z}-\frac{x(1-Z)}{(1-Z)^2-x^2}=0,\] where the last equality follows from the first equation in \eqref{Eq17}. This means that \[J(x,0)=1+\frac{xJ(x,0)}{1-x^2J(x,0)^2}.\] The theorem now follows from the Lagrange Inversion formula.  
\end{proof}

\begin{remark}\label{Rem3}
One can show that $\sum_{k=0}^{\left\lfloor\frac{n-1}{2}\right\rfloor}\frac{1}{2k+1}{n-k-1\choose k}{n\choose 2k}$ is also the number of Dyck paths of length $2n$ in which every string of consecutive down steps has odd length (see \cite{OEIS}). It would be interesting to have a bijection between the set of such Dyck paths and the set $\VHC(\Av_n(312,321))$. 
\end{remark}

\section{$\VHC(\Av(231,1243))$}\label{Sec:1243}

In \cite{DefantCatalan}, the current author considered uniquely sorted permutations avoiding one length-$3$ pattern and one length-$4$ pattern. He found connections between these uniquely sorted permutations and intervals in lattices of Dyck paths. He also gave several additional enumerative conjectures concerning uniquely sorted permutations avoiding a length-$3$ pattern and a length-$4$ pattern. In a similar vein, it seems promising to enumerate valid hook configurations of permutations that avoid a length-$3$ pattern and a length-$4$ pattern. The purpose of this section is to initiate this investigation by enumerating valid hook configurations of permutations that avoid $231$ and $1243$. To do this, we rely on a lemma that makes use of our connection between valid hook configurations and Motzkin paths. In what follows, let \[A_\ell(n)=|\VHC(\{\pi\in\Av_{n+\ell}(132,231):\tl(\pi)=\ell\})|,\] \[A_{\geq \ell}(n)=|\VHC(\{\pi\in\Av_{n+\ell}(132,231):\tl(\pi)\geq\ell\})|,\] and \[J(x,z)=\sum_{\ell\geq 0}\sum_{n\geq 0}A_{\geq\ell}(n)x^nz^\ell.\]

\begin{lemma}\label{Lem1}
We have \[J(x,z)=\frac{(-1+2z)(1-\sqrt{1-2x-3x^2})-x}{(1-z)(x(-2+z)+(1-\sqrt{1-2x-3x^2})z)}\]
\end{lemma}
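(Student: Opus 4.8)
The plan is to follow the same recurrence-and-generating-function template that was used successfully in the proofs of Theorem \ref{Thm1} and Theorem \ref{Thm5}, since we are again counting valid hook configurations of permutations avoiding $231$ (together with the additional pattern $132$, after using the length-preserving bijection $\VHC(\Av(231,1243))\to\VHC(\Av(132,231))$ from \cite{DefantFertilityWilf} to pass between the two pattern sets). First I would fix $\pi\in\{\sigma\in\Av_{n+\ell}(132,231):\tl(\sigma)=\ell\}$ with $\pi_{n+1-i}=n+1$, observe that $n+1-i$ is a tail-bound descent, and apply Corollary \ref{Cor1} to decompose $|\VHC(\pi)|$ along the choice of hook $H\in\SW_{n+1-i}(\pi)$. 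The key point is that avoiding $132$ and $231$ forces the structure of $\pi_U^H$ and $\pi_S^H$ to be rigid: I expect both the unsheltered and sheltered pieces to be essentially forced (as in the $231,321$ case, where the sheltered piece was the unique increasing permutation), so that the recurrence for $B_\ell(n+1)=A_\ell(n+1)$ will be a single-sum convolution in $\ell$ rather than the genuine double sum of \eqref{Eq3}. I would write this recurrence and translate it into a functional equation for $J(x,z)$ exactly as in \eqref{Eq4}--\eqref{Eq5}, using the identity $B_\ell(n+1)=B_{\geq\ell}(n+1)-B_{\geq\ell+1}(n)$ to produce the left-hand side.

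Once the functional equation is in hand, the expected mechanism is the \emph{kernel method} rather than the quadratic method, because the analogue of \eqref{Eq12} should be linear in $J(x,z)$: the equation will take the form $J(x,z)\,K(x,z)=(\text{explicit rational terms})+J(x,0)\,(\text{rational terms})$ for some kernel polynomial $K(x,z)$. I would then locate the unique small power-series root $Z=Z(x)$ of $K(x,Z)=0$ with $Z(x)=O(x)$. The square root $\sqrt{1-2x-3x^2}$ appearing in the target formula is the generating function $\sum_n M_n x^n$ for Motzkin numbers (up to the standard normalization $\sum_{n\ge 0}M_n x^n=\frac{1-x-\sqrt{1-2x-3x^2}}{2x^2}$), which strongly suggests that the kernel $K(x,z)$ will be quadratic in $z$ with discriminant $1-2x-3x^2$, so that $Z(x)$ is exactly the branch producing this radical. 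Substituting $z=Z(x)$ annihilates the $J(x,z)$ term and yields a closed formula for $J(x,0)$; back-substituting $Z(x)$ into the full equation and simplifying should then recover the claimed two-variable expression for $J(x,z)$.

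The main obstacle I anticipate is the very first step: correctly identifying the structure of the $H$-unsheltered and $H$-sheltered subpermutations under the joint avoidance of $132$ and $231$, and in particular verifying that any candidate piece can actually be realized (the ``any element can be chosen'' clause that appeared in the earlier proofs). Avoiding $231$ alone already pins down the alphabets of $\pi_U^H$ and $\pi_S^H$ as in Theorem \ref{Thm1}; the extra constraint of avoiding $132$ must further restrict which normalized permutations are admissible on each piece, and getting this bookkeeping exactly right is what determines the precise shape of the recurrence and hence of the kernel. I would therefore spend most of the effort justifying the decomposition combinatorially, after which the generating-function manipulation and the kernel substitution are routine algebra whose details I would omit. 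A secondary check is to confirm that the branch of $Z(x)$ selected by the $O(x)$ normalization is the one giving the minus sign in front of $\sqrt{1-2x-3x^2}$, ensuring the final formula has the stated form.
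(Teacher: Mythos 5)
Your route is genuinely different from the paper's. The paper does not set up a recurrence for $\Av(132,231)$ at all: it exploits the fact, already established in Theorem \ref{Thm3}, that $\widehat\DL_n$ restricts to a bijection $\VHC(\Av_n(231,312))\to\Int(\mathcal M_{n-1}^A)$, i.e., essentially to Motzkin paths of length $n-1$; it then observes that the tail length of the underlying permutation corresponds to the position of the first down step of the path, converts that statistic to ``number of step-endpoints on the horizontal axis'' via an extension of Deutsch's involution, and reads $J(x,z)$ off from the known generating function $F(x,z)=xz^2(1+xM(x))/(1-xz(1+xM(x)))$ for the latter statistic, using the length- and tail-length-preserving bijection $\VHC(\Av(132,231))\to\VHC(\Av(231,312))$ of \cite{DefantFertilityWilf}. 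Your plan instead redoes the Corollary \ref{Cor1} decomposition directly for the class $\Av(132,231)$ and solves the resulting catalytic equation by the kernel method; this is self-contained (no appeal to Deutsch's involution or to the known expansion of $F$), and it does go through: the kernel turns out to be $-(1+x)z^2+(1+x)z-x$, whose discriminant is $(1+x)(1-3x)=1-2x-3x^2$, exactly as you predicted, and the small root reproduces $J(x,0)=1+xM(x)$ consistently with Theorem \ref{Thm3}.

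Two corrections, one cosmetic and one substantive. Cosmetic: there is no bijection $\VHC(\Av(231,1243))\to\VHC(\Av(132,231))$ in \cite{DefantFertilityWilf}, and none is needed --- Lemma \ref{Lem1} is stated directly for $\Av(132,231)$ (indeed the two classes have different counting sequences, $M_{n-1}$ versus the sequence of Theorem \ref{Thm7}). Substantive: the step you defer as ``the main obstacle'' is the whole content of the argument, and your guess about it is only half right. Joint avoidance of $132$ and $231$ forces the value $n+1$ (the largest non-tail entry) to occupy position $1$: by $231$-avoidance every entry to the left of $n+1$ is smaller than every non-tail entry to its right, so any entry on its left together with $n+1$ and any non-tail entry on its right would form a $132$. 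Hence $\pi_U^H=(n+1)(n+2+j)\cdots(n+\ell+1)$ is increasing with a unique valid hook configuration, and the recurrence collapses to the single sum $A_\ell(n+1)=\sum_{j=1}^\ell A_{\geq j-1}(n)$ for $n\geq 1$. But the sheltered piece is \emph{not} forced --- it ranges freely over $\{132,231\}$-avoiders with tail length at least $j-1$, which is precisely why $J$ involves the radical rather than being rational (as it would be if both pieces were rigid, as you suggest). You also need the boundary case $n=0$ handled separately ($A_\ell(1)=0$, whereas the sum would give $\ell$), since the recurrence presupposes a nonempty block to the right of $n+1$. With these points supplied, your kernel-method computation completes a valid alternative proof.
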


\begin{proof}
Let $M(x)=\sum_{n\geq 0}M_nx^n=\dfrac{1-x-\sqrt{1-2x-3x^2}}{2x^2}$ be the generating function of the sequence of Motzkin numbers, and consider the generating function $F(x,z)=\dfrac{xz^2(1+xM(x))}{1-xz(1+xM(x))}$. For $n\geq 1$, let $\mathfrak a(n,\ell)$ be the number of Motzkin paths of length $n$ in which $\ell$ endpoints of steps touch the horizontal axis. Also, let $\mathfrak b(n,\ell)$ be the number of Motzkin paths of length $n$ in which the first down step is the $\ell^\text{th}$ step (with the convention that $\mathfrak b(n,n+1)=1$). In \cite{Deutsch}, Deutsch described a simple involution on Dyck paths. By extending this involution in an obvious way to Motzkin paths, one can show that $\mathfrak a(n,\ell)=\mathfrak b(n,\ell)$. Furthermore, it is known \cite{OEIS} that $\mathfrak a(n,\ell)$ is the coefficient of $x^nz^\ell$ in $F(x,z)$. Hence, $\mathfrak b(n,\ell)$ is the coefficient of $x^nz^\ell$ in $F(x,z)$. 

We now refer back to the proof of Theorem \ref{Thm3}. If $\Lambda$ is a Motzkin path of length $n-1$ and $\ell\in\{0,\ldots,n-2\}$, then the first down step in $\Lambda$ is the $(\ell+1)^\text{st}$ step if and only if the tail length of the underlying permutation of $\widehat\DL_n^{-1}(\Lambda,\Lambda)$ is $\ell$. Moreover, the bijection $\VHC(\Av(132,231))\to\VHC(\Av(231,312))$ from \cite{DefantFertilityWilf} preserves lengths and tail lengths of the underlying permutations of valid hook configurations. It follows that $A_\ell(n-\ell)=\mathfrak b(n-1,\ell+1)$ whenever $n\geq 3$ and $\ell\in\{1,\ldots,n-2\}$. We also have $A_\ell(0)=1$ and $A_\ell(1)=0$ for all $\ell\geq 0$. After putting this all together, we find that 
\begin{equation}\label{Eq18}
\sum_{\ell\geq 0}\sum_{n\geq 0}A_\ell(n-\ell)x^nz^\ell=1+xz+x^2\left(\frac{1}{xz}F(x,z)-\frac{z(1-z)}{1-xz}\right).
\end{equation} Let $\widetilde F(x,z)$ be the generating function in \eqref{Eq18}. Straightforward manipulations allow us to find that 
\begin{equation}\label{Eq19}
J(x,z)=\frac{(z/x)\widetilde F(x,z/x)-J(x,0)}{z/x-1}.
\end{equation} It follows from Theorem \ref{Thm3} that $J(x,0)=1+xM(x)$. The remainder of the proof now amounts to substituting the relevant expressions into \eqref{Eq19} and simplifying.   
\end{proof}

\begin{theorem}\label{Thm7}
We have \[\sum_{n\geq 0}|\VHC(\Av_n(231,1243))|x^n=1+\frac{2x^2}{3x-1+\sqrt{1-2x-3x^2}}.\]
\end{theorem}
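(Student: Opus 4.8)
The plan is to mirror the structure of the proofs of Theorem \ref{Thm1} and Theorem \ref{Thm5}: set up a decomposition of valid hook configurations of $\Av(231,1243)$-permutations at a tail-bound descent via Corollary \ref{Cor1}, extract a functional equation for a bivariate generating function $K(x,z)$ tracking length and tail length, and solve it using the kernel method. The crucial new ingredient is that the $1243$-avoidance condition couples the sheltered subpermutation $\pi_S^H$ to the constraint that it avoid $231$ \emph{and} $1243$ together with a further restriction forced by the entries below the hook; my expectation is that the $H$-unsheltered part will again avoid $\{231,1243\}$, while the $H$-sheltered part will turn out to avoid the \emph{pair} $\{132,231\}$ (so that its enumeration is governed precisely by the series $J(x,z)$ computed in Lemma \ref{Lem1}). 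This is exactly why Lemma \ref{Lem1} is proved immediately beforehand and packaged as a closed form: it is meant to be substituted in wholesale.

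Concretely, I would fix $\pi \in \Av_{n+\ell}(231,1243)$ with $\tl(\pi)=\ell$ and let $n+1-i$ be the position of the largest non-tail entry, which is a tail-bound descent by the same $231$-avoidance argument used in Theorem \ref{Thm1}. Applying Corollary \ref{Cor1}, I choose the hook $H$ by selecting $j\in\{1,\ldots,\ell\}$ so its northeast endpoint sits in the tail. The key structural claim to verify is: because $\pi$ avoids $231$ and $1243$, the normalized $H$-unsheltered permutation again avoids $\{231,1243\}$ and lies in the analogue of $\mathcal D_{\geq \ell-j+1}(n-i)$, whereas the normalized $H$-sheltered permutation $\pi_S^H$ must avoid $\{132,231\}$ — the pattern $1243$ in $\pi$ forces the sheltered block, which sits strictly below the hook and to the left of the large tail entries, to contain no occurrence of $132$ in addition to $231$. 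Granting this, the number of configurations on the sheltered side with its valid hook configurations is counted by the $A_{\geq j-1}(i)$ coefficients of $J(x,z)$ from Lemma \ref{Lem1}. This produces a convolution recurrence of the shape $B_\ell(n+1)=\sum_{i}\sum_{j} B_{\geq \ell-j+1}(n-i)\,\tilde A_{\geq j-1}(i)$, structurally identical to \eqref{Eq3} but with one factor replaced by the explicit series $J$.

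Translating the recurrence into generating functions, I set $K(x,z)=\sum_{\ell\geq 0}\sum_{n\geq 0}C_{\geq\ell}(n)x^nz^\ell$ for the $\{231,1243\}$-statistics and run the two complementary summations exactly as in \eqref{Eq4}--\eqref{Eq5}: one using the recurrence (yielding a product $(K(x,z)-K(x,0))$ against the known series $J$), the other using $B_\ell(n+1)=B_{\geq\ell}(n+1)-B_{\geq\ell+1}(n)$ (yielding the linear expression $(K(x,z)-1/(1-z))/x - (K(x,z)-K(x,0))/z$). Equating them gives a functional equation for $K(x,z)$ that is \emph{linear} in $K(x,z)$ with a kernel depending on the closed form of $J$. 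I then apply the kernel method as in Theorem \ref{Thm5}: locate the unique small power series $Z=Z(x)$ canceling the kernel, substitute $z=Z(x)$ to solve for $K(x,0)=\sum_n |\VHC(\Av_n(231,1243))|x^n$, and simplify using the defining algebraic relation for $M(x)$, i.e.\ $xM(x)^2 - (1-x)M(x)+1=0$, to collapse the radical expressions into the stated $1 + 2x^2/(3x-1+\sqrt{1-2x-3x^2})$.

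The main obstacle I anticipate is the structural claim in the second paragraph: precisely identifying which pattern class the sheltered and unsheltered subpermutations fall into once both $231$ and $1243$ are forbidden. Unlike the single-pattern case of Theorem \ref{Thm1}, the length-$4$ pattern $1243$ can straddle the hook $H$ — its entries may be distributed across the sheltered block, the hook endpoints, and the tail — so I must carefully rule out every way an occurrence could form after the decomposition. Verifying that the sheltered block is exactly a $\{132,231\}$-avoider (rather than merely $231$-avoiding) is the linchpin that makes Lemma \ref{Lem1} applicable; getting this wrong would change the convolution factor and break the final simplification. The remaining kernel-method computation and the radical simplification are routine by comparison, of the same difficulty as in Theorems \ref{Thm1} and \ref{Thm5}.
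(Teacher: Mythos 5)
Your overall strategy (decompose at the tail-bound descent via Corollary \ref{Cor1}, feed the sheltered side into Lemma \ref{Lem1}, then apply the kernel method) is the right one, and you correctly identify that the sheltered block must avoid $\{132,231\}$. But your proposed recurrence $B_\ell(n+1)=\sum_{i}\sum_{j}B_{\geq\ell-j+1}(n-i)\,\tilde A_{\geq j-1}(i)$ is not the correct one, and the error lies on the unsheltered side rather than on the sheltered side you flagged as the linchpin. When $1\leq i\leq n-1$, the $1243$-avoidance together with the nonempty sheltered block forces $\pi_1\cdots\pi_{n-i}$ to be the decreasing permutation $(n-i)(n-i-1)\cdots 1$: if $\pi_a<\pi_b$ with $a<b\leq n-i$, then $\pi_a$, $\pi_b$, any sheltered entry (whose value lies strictly between $n-i$ and $n+1$), and the entry $n+1$ form an occurrence of $1243$. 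Hence $\pi_U^H$ is a single, completely determined permutation, and the number of choices for $\mathcal H_U$ is the binomial coefficient $\binom{\ell-j+1}{n-i-1}$ (choosing northeast endpoints for the nested hooks of the decreasing run), not $B_{\geq\ell-j+1}(n-i)$. These quantities genuinely differ: for $n-i=2$ and $\ell-j+1=1$ the forced unsheltered permutation $213$ has $\binom{1}{1}=1$ valid hook configuration, while $B_{\geq 1}(2)=2$ because it also counts $123$.

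Moreover, the recursion on the $\{231,1243\}$ class re-enters only through the boundary case $i=n$, where there is no entry to the left of the maximum, the $132$-restriction on the sheltered block disappears (it is merely a $\{231,1243\}$-avoider), and the count is $B_{\geq j-1}(n)$. The correct recurrence is therefore
\[B_\ell(n+1)=\sum_{j=1}^\ell\sum_{i=1}^{n-1}\binom{\ell-j+1}{n-i-1}A_{\geq j-1}(i)+\sum_{j=1}^\ell B_{\geq j-1}(n),\]
whose generating-function form is \eqref{Eq21}, the binomial sum producing the factor $xz(1+x)/(1-(1+x)z)$ multiplying $J(x,z)-1/(1-z)$. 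Running the kernel method on your convolution recurrence instead would yield a different functional equation and a different (incorrect) series, so this is a genuine gap rather than a cosmetic difference.
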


\begin{proof}
Let \[\mathcal D_\ell(n)=\{\pi\in\Av_{n+\ell}(231,1243):\tl(\pi)=\ell\}\quad\text{and}\quad \mathcal D_{\geq\ell}(n)=\{\pi\in\Av_{n+\ell}(231,1243):\tl(\pi)\geq\ell\}.\] Let $B_\ell(n)=|\VHC(\mathcal D_\ell(n))|$ and $B_{\geq\ell}(n)=|\VHC(\mathcal D_{\geq\ell}(n))|$. 

Suppose $\pi\in\mathcal D_\ell(n+1)$ is such that $\pi_{n+1-i}=n+1$ (where $n\geq 0$). Then $n+1-i$ is a tail-bound descent of $\pi$. 
We can use Corollary \ref{Cor1} to see that $|\VHC(\pi)|$ is equal to the number of triples $(H,\mathcal H_U,\mathcal H_S)$, where $H\in\SW_{n+1-i}(\pi)$, $\mathcal H_U\in \VHC(\pi_U^H)$, and $\mathcal H_S\in \VHC(\pi_S^H)$. Choosing $H$ amounts to choosing the number $j\in\{1,\ldots,\ell\}$ such that the northeast endpoint of $H$ is $(n+1+j,n+1+j)$. The permutation $\pi$ and the choice of $H$ determine the permutations $\pi_U^H$ and $\pi_S^H$. On the other hand, the choices of $H$ and the permutations $\pi_U^H$ and $\pi_S^H$ uniquely determine $\pi$. It follows that $B_\ell(n+1)$, which is the number of ways to choose an element of $\VHC(\mathcal D_\ell(n+1))$, is also the number of ways to choose $j$, the permutations $\pi_U^H$ and $\pi_S^H$, and the valid hook configurations $\mathcal H_U$ and $\mathcal H_S$. Let us fix a choice of $j$. 

First, suppose $i\leq n-1$. Because $\pi$ avoids $231$ and $1243$, one can show that \[\pi_U^H=(n-i)(n-i-1)\cdots 1(n+1)(n+2+j)(n+3+j)\cdots(n+\ell+1),\] while $\pi_S^H$ must be a permutation of $\{n-i+1,\ldots,n+j\}\setminus\{n+1\}$ that avoids $132$ and $231$. The valid hook configuration $\mathcal H_U$ has $n-i-1$ hooks; choosing this valid hook configuration amounts to choosing the northeast endpoints of these hooks from the top $\ell-j+1$ points in $\pi_U^H$. Thus, the number of choices for $\mathcal H_U$ is ${\ell-j+1\choose n-i-1}$. The normalization of $\pi_S^H$ is in $\Av_{i+j-1}(132,231)$ and has tail length at least $j-1$. Any permutation in $\Av_{i+j-1}(132,231)$ and has tail length at least $j-1$ can be chosen as the normalization of $\pi_S^H$. Also, every permutation has the same number of valid hook configurations as its normalization. Consequently, the number of choices for $\pi_S^H$ and $\mathcal H_S$ is $A_{\geq j-1}(i)$, where we have preserved the notation immediately preceding Lemma \ref{Lem1}. 

If $i=n$, then we repeat the same argument, except that there is only one choice for $\pi_U^H$ and $\mathcal H_U$ and that the number of choices for $\pi_S^H$ and $\mathcal H_S$ is $B_{\geq j-1}(n)$. We now obtain the recurrence  
\[B_\ell(n+1)=\sum_{j=1}^\ell\sum_{i=1}^{n-1}{\ell-j+1\choose n-i-1}A_{\geq j-1}(i)+\sum_{j=1}^\ell B_{\geq j-1}(n).\]
After multiplying this equation by $x^nz^\ell$, summing over $\ell\geq 0$ and $n\geq 0$, and simplifying, we find that 
\begin{equation}\label{Eq21}
\sum_{\ell\geq 0}\sum_{n\geq 0}B_\ell(n+1)x^nz^\ell=\frac{z}{1-z}\left(I(x,z)-\frac{1}{1-z}\right)+\frac{xz(1+x)}{1-(1+x)z}\left(J(x,z)-\frac{1}{1-z}\right),
\end{equation} where $I(x,z)=\sum_{\ell\geq 0}\sum_{n\geq 0}B_{\geq\ell}(n)x^nz^\ell$ and $J(x,z)$ is as in Lemma \ref{Lem1}. The same argument used to derive \eqref{Eq5} in the proof of Theorem \ref{Thm1} shows that 
\begin{equation}\label{Eq20}
\sum_{\ell\geq 0}\sum_{n\geq 0}B_\ell(n+1)x^nz^\ell=\frac{I(x,z)-1/(1-z)}{x}-\frac{I(x,z)-I(x,0)}{z}.
\end{equation} 
We now put \eqref{Eq21} and \eqref{Eq20} together and rearrange terms to find that 
\begin{equation}\label{Eq22}
I(x,z)\left(1-\frac{z}{x}+\frac{z^2}{1-z}\right)=I(x,0)-T(x,z),
\end{equation} where \[T(x,z)=\frac{z}{x(1-z)}-\frac{z^2}{(1-z)^2}+\frac{xz^2(1+x)(J(x,z)-1/(1-z))}{1-(1+x)z}.\]

As in previous proofs, we now use the kernel method. Let $Z(x)=\dfrac{x(1+xM(x))}{1+x}$, where $M(x)=\sum_{n\geq 0}M_nx^n=\dfrac{1-x-\sqrt{1-2x-3x^2}}{2x^2}$ is the Motzkin generating function. We have $\displaystyle 1-\frac{Z(x)}{x}+\frac{Z(x)^2}{1-Z(x)}=0$, so substituting $z=Z(x)$ into \eqref{Eq22} yields $I(x,0)=T(x,Z(x))$. The expression $T(x,Z(x))$ simplifies to $\displaystyle 1+\frac{2x^2}{3x-1+\sqrt{1-2x-3x^2}}$. Finally, \[I(x,0)=\sum_{n\geq 0}B_{\geq 0}(n)x^n=\sum_{n\geq 0}|\VHC(\Av_n(231,1243))|x^n. \qedhere\]
\end{proof}

\section{Concluding Remarks and Future Directions}
Most of the present article has concerned sets of the form $\VHC(\Av(\tau^{(1)},\ldots,\tau^{(r)}))$, where $\tau^{(1)},\ldots,$ $\tau^{(r)}\in S_3$. These sets are completely uninteresting when one of the permutations $\tau^{(1)},\ldots,\tau^{(r)}$ is $123$ or $213$, so we can restrict our attention to the cases in which $\tau^{(1)},\ldots,\tau^{(r)}\in \{132,231,312,321\}$. We have said nothing about this problem when $r\geq 3$, but that is because the enumerative results are not terribly difficult in these cases. For completeness, we state these results (without proof) in the following proposition. Let $F_n$ denote the $n^\text{th}$ Fibonacci number (with $F_1=F_2=1$). 

\begin{proposition}\label{Prop1}
We have \[\sum_{n\geq 0}|\VHC(\Av_n(231,312,321))|x^n=\frac{1 - x + x^2 - \sqrt{1 - 2x - x^2 - 2x^3 + x^4}}{2x^2}.\]
For every $n\geq 1$, we have 
\[|\VHC(\Av_n(132,231,321))|=|\VHC(\Av_n(132,312,321))|=1+{n-1\choose 2},\] 
\[|\VHC(\Av_n(132,231,312))|=F_n,\] and \[|\VHC(\Av_n(132,231,312,321))|=n-1.\] 
\end{proposition}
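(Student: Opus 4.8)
The plan is to establish the four enumerative formulas in Proposition~\ref{Prop1} by the same technique used throughout the paper: identify a tail-bound descent, apply Corollary~\ref{Cor1} to decompose each valid hook configuration into an unsheltered and a sheltered piece, and extract a recurrence on the generating functions organized by tail length. Since the proposition asserts four separate results, I would handle them one at a time, but in each case the three-pattern (and four-pattern) restrictions force the underlying permutations to have a very rigid shape, so the bookkeeping is much lighter than in the two-pattern theorems.

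First I would treat the Fibonacci and polynomial cases, which I expect to be purely structural. For $\Av_n(132,231,312)$, avoiding all three of these patterns forces a permutation into a highly constrained ``decreasing-then-appending'' form; I would classify these permutations explicitly, count their valid hook configurations directly (each such permutation should admit a unique or easily-counted configuration), and check that the resulting count satisfies the Fibonacci recurrence $a_n = a_{n-1}+a_{n-2}$ with the correct initial values. Similarly, for $\Av_n(132,231,321)$ and $\Av_n(132,312,321)$, I would use the length-preserving bijections of \cite{DefantFertilityWilf} to reduce to a single set, explicitly describe the admissible permutations (which should be few enough to enumerate by hand as a function of $n$), and verify the closed form $1+\binom{n-1}{2}$. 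The four-pattern case $\Av_n(132,231,312,321)$ is the most restrictive: avoiding all four length-$3$ patterns outside $\{123,213\}$ leaves only layered permutations that are also $132$-avoiding, and I expect a direct description showing exactly $n-1$ valid hook configurations.

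For the algebraic case $\Av_n(231,312,321)$, I would set up tail-length generating functions $G_\ell(x)=\sum_n B_{\geq\ell}(n)x^n$ and $I(x,z)=\sum_\ell G_\ell(x)z^\ell$ exactly as in the proofs of Theorems~\ref{Thm5} and~\ref{Thm7}. Avoiding $231$ and $321$ forces the sheltered subpermutation $\pi_S^H$ to be increasing (contributing a single configuration), while the additional avoidance of $312$ rigidly constrains the unsheltered part, yielding a recurrence of the shape in \eqref{Eq9} but with modified coefficients. I would then derive a functional equation in $I(x,z)$ with a quadratic kernel in $z$, apply the kernel method to select the appropriate root $Z(x)$ of the kernel, and solve for $I(x,0)$, arriving at the stated square-root expression with radicand $1-2x-x^2-2x^3+x^4$.

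The main obstacle I anticipate is not the generating-function manipulation, which is routine once the recurrence is in hand, but rather correctly enumerating the valid hook configurations of the constrained permutations in each restricted class. In particular, I must carefully verify the exact shape of the $H$-unsheltered and $H$-sheltered subpermutations under each combination of avoided patterns, since a single misidentified case (for instance, an overlooked admissible configuration on the tail) would throw off the recurrence and hence the final closed form. The Fibonacci case in particular requires a clean combinatorial description of how a new maximal element can be appended, and confirming that this yields precisely the two-term recurrence is the most delicate structural step.
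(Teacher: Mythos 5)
The paper deliberately gives no proof of Proposition \ref{Prop1}: the sentence introducing it reads ``we state these results (without proof).'' So there is no argument of the author's to compare yours against, and I can only assess your plan on its own merits. The plan is viable and your structural guesses are essentially correct: $\Av_n(132,231,312)$ consists exactly of the permutations $k(k-1)\cdots 1(k+1)(k+2)\cdots n$ for $1\leq k\leq n$ (layered permutations in which every layer after the first is a singleton), the quadruple class $\Av_n(132,231,312,321)$ contains only $12\cdots n$ and $2134\cdots n$ (contributing $1$ and $n-2$ valid hook configurations, hence $n-1$ in total for $n\geq 2$), and the $(132,231,321)$ and $(132,312,321)$ cases reduce via the bijections of \cite{DefantFertilityWilf} to the permutations $(i+1)12\cdots i(i+2)\cdots n$, which are the members $\zeta_i$ from the proof of Theorem \ref{Thm4} whose first increasing run has length one; each contributes $\ell=n-i-1$ configurations, and summing gives $1+\binom{n-1}{2}$.

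Two refinements are needed to turn the plan into a proof. First, in the Fibonacci case the permutations do \emph{not} each carry a unique configuration: the $k-1$ hooks of $k(k-1)\cdots 1(k+1)\cdots n$ must all have northeast endpoints in the tail, and the non-crossing condition forces these endpoints to be chosen as a decreasing sequence, so this permutation has exactly $\binom{n-k}{k-1}$ valid hook configurations; the count $F_n$ then comes from the diagonal identity $\sum_k\binom{n-k}{k-1}=F_n$ rather than from a two-term recurrence at the level of individual permutations. This is precisely the ``delicate structural step'' you flagged, and it requires the nesting analysis, not just a description of how a maximum is appended. Second, for $\Av_n(231,312,321)$ your kernel-method setup will work (the class is the set of layered permutations with all layers of size at most $2$, the sheltered subpermutation is always increasing, and the recurrence is a one-index variant of \eqref{Eq9}), but a shorter route is already available in the paper: Theorem \ref{Thm3} identifies $\VHC(\Av_n(231,312))$ with ${\bf M}_{n-1}$ via $\widehat\DL_n$, and the additional $321$-avoidance corresponds exactly to requiring every exponent $\gamma_i$ to be $0$ or $1$, i.e., to Motzkin paths with no two consecutive down steps, whose generating function is the stated algebraic expression (a shift of OEIS A004148). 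Either route is fine, but the second avoids setting up a new functional equation.
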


We have said nothing about the numbers $|\VHC(\Av_n(321))|$; it would be interesting to have nontrivial results concerning these numbers or their generating function. We also wish to remind the reader of the combinatorial proof requested in Remark \ref{Rem3}. 

Finally, we believe it could be interesting to enumerate valid hook configurations of permutations avoiding collections of length-$4$ patterns. One could also enumerate valid hook configurations of permutations avoiding one length-$3$ pattern and one length-$4$ pattern. We initiated this direction in Section \ref{Sec:1243} when we enumerated $\VHC(\Av(231,1243))$. We also have the following conjecture. 

\begin{conjecture}\label{Conj2}
We have \[\sum_{n\geq 0}|\VHC(\Av_n(132,3241))|x^n=\sum_{n\geq 0}|\VHC(\Av_n(231,2143))|x^n=\frac{1+x^2-\sqrt{1 - 4x + 2x^2 + x^4}}{2x}.\]
\end{conjecture}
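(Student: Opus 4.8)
The plan is to compute $\sum_{n\ge 0}|\VHC(\Av_n(231,2143))|x^n$ directly via the tail-decomposition machinery of Corollary \ref{Cor1} together with the kernel method, exactly as in the proofs of Theorems \ref{Thm5} and \ref{Thm7}, and then to transfer the result to $\Av(132,3241)$ through a length-preserving bijection of the type produced in \cite{DefantFertilityWilf}. Concretely, I would set $\mathcal D_\ell(n)=\{\pi\in\Av_{n+\ell}(231,2143):\tl(\pi)=\ell\}$ and $\mathcal D_{\ge\ell}(n)=\{\pi\in\Av_{n+\ell}(231,2143):\tl(\pi)\ge\ell\}$, write $B_\ell(n)=|\VHC(\mathcal D_\ell(n))|$ and $B_{\ge\ell}(n)=|\VHC(\mathcal D_{\ge\ell}(n))|$, and decompose a generic $\pi\in\mathcal D_\ell(n+1)$ at the tail-bound descent occupied by the value $n+1$, using the hook $H$ whose northeast endpoint is the tail point $(n+1+j,n+1+j)$.

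The crucial structural step---the analogue of the ``$\pi_S^H$ is increasing'' phenomenon driving Theorem \ref{Thm5}---is that joint avoidance of $231$ and $2143$ forces the unsheltered prefix $\pi_1\cdots\pi_{n-i}$ to be \emph{increasing}. Indeed, $231$-avoidance already forces every entry of this prefix to lie below every sheltered entry; if the prefix had a descent $\pi_a>\pi_b$, then $\pi_a,\pi_b$, the value $n+1$, and any sheltered entry would form a $2143$ pattern. Since $n+1$ cannot occupy the last non-tail position (that would make $\tl(\pi)\ge\ell+1$), its position index satisfies $i\ge 1$, so the sheltered subpermutation is always nonempty; hence the prefix is always increasing, $\pi_U^H$ is an increasing permutation with a unique hookless valid hook configuration, and the sheltered part ranges over all of $\Av(231,2143)$. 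This yields the clean self-referential recurrence
\[B_\ell(n+1)=\sum_{i=1}^{n}\sum_{j=1}^{\ell}B_{\ge j-1}(i),\]
which, unlike the situation in Theorem \ref{Thm7}, needs no auxiliary family such as the one in Lemma \ref{Lem1}.

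With $G_\ell(x)=\sum_{n\ge 0}B_{\ge\ell}(n)x^n$ and $I(x,z)=\sum_{\ell\ge 0}G_\ell(x)z^\ell$, the recurrence translates---after the bookkeeping that produced \eqref{Eq5} and \eqref{Eq10}---into a single functional equation whose kernel is $\tfrac1x-\tfrac1z-\tfrac{z}{(1-x)(1-z)}$. Clearing denominators, the kernel vanishes exactly when $z^2-(1-x^2)z+(x-x^2)=0$, whose relevant root is $Z(x)=\tfrac12\big(1-x^2-\sqrt{1-4x+2x^2+x^4}\big)$; note that the discriminant is precisely the radicand in the claimed formula. Substituting $z=Z(x)$ and solving for $I(x,0)$ should give, after routine simplification using the identity $Z=x\big(I(x,0)-x\big)$, the generating function $\tfrac{1+x^2-\sqrt{1-4x+2x^2+x^4}}{2x}$, equivalently the algebraic relation $x\,I(x,0)^2-(1+x^2)\,I(x,0)+1=0$; one verifies that the first several values $1,1,1,2,5,13,35,\dots$ agree.

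It remains to establish the equality $\sum_{n\ge0}|\VHC(\Av_n(132,3241))|x^n=\sum_{n\ge0}|\VHC(\Av_n(231,2143))|x^n$. The cleanest route is to show that the length-preserving bijection $\VHC(\Av(132))\to\VHC(\Av(231))$ of \cite{DefantFertilityWilf} restricts to a bijection $\VHC(\Av(132,3241))\to\VHC(\Av(231,2143))$. I expect this to be the main obstacle: the restrictions of that bijection recorded in \cite{DefantFertilityWilf} concern only length-$3$ patterns, so one must analyze how its underlying transformation acts on the length-$4$ patterns $3241$ and $2143$ and check that it carries one to the other. Should this compatibility prove elusive, a fallback is to rerun the recurrence-and-kernel computation directly for $\Av(132,3241)$---replacing the $231$-decomposition by the $132$-decomposition $\pi=L\,n\,R$ with $\max R<\min L$---and to verify that it produces the identical functional equation.
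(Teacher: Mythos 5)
The statement you were asked to prove is stated in the paper as Conjecture \ref{Conj2} and is \emph{not} proved there, so there is no paper proof to compare against; the only relevant remark is the paper's observation that the first equality follows from the results of \cite{DefantFertilityWilf}. That remark already disposes of what you flagged as your main obstacle: you do not need to analyze how the bijection of \cite{DefantFertilityWilf} interacts with the length-$4$ patterns $3241$ and $2143$, since the equality $|\VHC(\Av_n(132,3241))|=|\VHC(\Av_n(231,2143))|$ is asserted to follow from that paper. The genuinely open content of the conjecture is the explicit generating function, which is exactly what your computation targets.

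I checked your computation, and it is sound; written out in full it would settle the conjecture. Your structural lemma is correct: $231$-avoidance (via the pattern formed with $n+1$) forces every prefix entry below every entry of the middle block strictly between the position of $n+1$ and the tail; a prefix descent $\pi_a>\pi_b$ together with $n+1$ and any middle entry $s$ (so $\pi_b<\pi_a<s<n+1$) is an occurrence of $2143$; and the middle block is nonempty because $\tl(\pi)=\ell$ forces $i\geq 1$, exactly as you argue. Two details you should make explicit when writing this up. First, you must also verify that no occurrence of $231$ or $2143$ straddles the blocks, so that the middle block really ranges over \emph{all} of $\Av(231,2143)$; this holds (prefix entries, being minimal and increasing, can only play the role of a ``$1$''; $n+1$ as a ``$4$'' or ``$3$'' would require a prefix inversion; tail entries are too large and increasing), but it is part of the proof. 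Second, for the hook with parameter $j$, the sheltered permutation is the middle block of length $i$ followed by $j-1$ increasing entries, so its normalization ranges over $\{\sigma\in\Av_{i+j-1}(231,2143):\tl(\sigma)\geq j-1\}$; this is precisely what makes $B_{\geq j-1}(i)$ the correct count in your recurrence $B_\ell(n+1)=\sum_{i=1}^{n}\sum_{j=1}^{\ell}B_{\geq j-1}(i)$. From there, the bookkeeping parallel to \eqref{Eq5} and \eqref{Eq10} gives
\[\frac{I(x,z)-1/(1-z)}{x}-\frac{I(x,z)-I(x,0)}{z}=\frac{z}{(1-x)(1-z)}\left(I(x,z)-\frac{1}{1-z}\right),\]
with kernel $\frac1x-\frac1z-\frac{z}{(1-x)(1-z)}$ vanishing exactly when $z^2-(1-x^2)z+(x-x^2)=0$, as you say; substituting the small root $Z(x)=\frac12\bigl(1-x^2-\sqrt{1-4x+2x^2+x^4}\bigr)$ yields $I(x,0)=\frac{Z}{x(1-Z)}-\frac{Z^2}{(1-x)(1-Z)^2}$, and I verified algebraically (using the kernel relation) that this equals $Z/x+x$, i.e., $Z=x(I(x,0)-x)$, which gives $x\,I(x,0)^2-(1+x^2)\,I(x,0)+1=0$ and hence the claimed branch $\frac{1+x^2-\sqrt{1-4x+2x^2+x^4}}{2x}$, with initial coefficients $1,1,1,2,5,13,35$ as you state. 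Your fallback computation for $\Av(132,3241)$ is therefore unnecessary.
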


Let us remark that the first equality in Conjecture \ref{Conj2} follows from the results in \cite{DefantFertilityWilf}. Thus, the actual content of the conjecture lies in the explicit form of the generating function.

\section{Acknowledgments}
The author was supported by a Fannie and John Hertz Foundation Fellowship and an NSF Graduate Research Fellowship.

\end{document}